\journal{Arxiv}
\newtheorem{theorem}{Theorem}
\newtheorem{lemma}{Lemma}
\newtheorem{corollary}{Corollary}
\newtheorem{example}{Example}
\newtheorem{conjecture}{Conjecture}
\newtheorem{problem}{Problem}
\begin{document}

\begin{frontmatter}

\title{On strong nodal domains for eigenfunctions of Hamming graphs}

\author[1]{Alexandr Valyuzhenich\corref{cor1}}
\cortext[cor1]{Corresponding author}
\ead{graphkiper@mail.ru}

\author[2]{Konstantin Vorob'ev}
\ead{konstantin.vorobev@gmail.com}

\address[1]{Postdoctoral Research Station of Mathematics, Hebei Normal University, Shijiazhuang, China}

\address[2]{Institute of Mathematics and Informatics, Bulgarian Academy of Sciences, 8 Acad G. Bonchev str., Sofia, 1113, Bulgaria}

\begin{abstract}
The Laplacian matrix of the $n$-dimensional hypercube has $n+1$ distinct eigenvalues $2i$, where $0\leq i\leq n$.
In 2004, B\i y\i ko\u{g}lu, Hordijk, Leydold, Pisanski and Stadler initiated the study of eigenfunctions of hypercubes with the minimum number of weak and strong nodal domains.
In particular, they proved that for every $1\leq i\leq \frac{n}{2}$ there is an eigenfunction of the hypercube
with eigenvalue $2i$ that have exactly two strong nodal domains.
Based on computational experiments, they conjectured that the result also holds for all $1\leq i\leq n-2$.
In this work, we confirm their conjecture for $i\leq \frac{2}{3}(n-\frac{1}{2})$ if $i$ is odd
and for $i\leq \frac{2}{3}(n-1)$ if $i$ is even.
We also consider this problem for the Hamming graph $H(n,q)$, $q\geq 3$ (for $q=2$, this graph coincides with the $n$-dimensional hypercube), 
and obtain even stronger results for all $q\geq 3$.
\end{abstract}

\begin{keyword}
hypercube \sep Hamming graph \sep strong nodal domain \sep eigenfunction
\vspace{\baselineskip}
\MSC[2010] 05C50 \sep 05B30
\end{keyword}

\end{frontmatter}

\section{Introduction}\label{Sec:Intro}
We consider finite undirected graphs without loops and multiple edges.
Let $G$ be a graph with vertex set $V$ and let $f$ be a real-valued function on $V$.
A {\em positive (negative) strong nodal domain} of $f$ is a maximal connected induced subgraph of $G$ 
on vertices $x\in V$ with $f(x)>0$ ($f(x)<0$).
A {\em positive (negative) weak nodal domain} of $f$ is a maximal connected induced subgraph of $G$ 
on vertices $x\in V$ with $f(x)\geq 0$ ($f(x)\leq 0$)  that contains at least one non-zero vertex.
The number of strong and weak nodal domains of $f$ is denoted by $\mathrm{SND}(f)$ and $\mathrm{WND}(f)$ respectively.

Let $G$ be a graph with $n$ vertices.
A symmetric real $n\times n$ matrix $M$ is called a {\em generalized Laplacian} of $G$ if
it has non-positive off-diagonal entries and two distinct vertices $x$ and $y$ are adjacent if and only if $M_{x,y}<0$.
There are no restrictions on the diagonal entries of $M$.

In 2001, Davies, Gladwell, Leydold and Stadler \cite{DGLS01} established upper bounds on the number of weak and strong nodal domains for eigenfunctions of generalized Laplacians.

\begin{theorem}[\cite{DGLS01}, Theorems 1 and 2]\label{Th:NDT}
Let $M$ be a generalized Laplacian of a connected graph with $n$ vertices.
Then any eigenfunction $f$ corresponding to the $k$-th eigenvalue of $M$
with multiplicity $r$ has at most $k$ weak nodal domains and $k+r-1$ strong nodal domains:
\begin{equation}\label{}
\mathrm{WND}(f)\leq k\qquad\text{and}\qquad\mathrm{SND}(f)\leq k+r-1.
\end{equation}
\end{theorem}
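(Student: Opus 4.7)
The plan is to extend Courant's classical nodal domain theorem from the continuous Laplacian to the discrete generalized Laplacian setting by combining two standard ingredients: a Perron--Frobenius argument on each nodal domain together with a Courant--Fischer (equivalently Sylvester inertia) argument for the global eigenvalue count. Order the eigenvalues $\lambda_1\leq\lambda_2\leq\cdots\leq\lambda_n$ of $M$ counted with multiplicities, so the hypothesis that $f$ corresponds to the $k$-th eigenvalue of multiplicity $r$ means $\lambda_k=\lambda_{k+1}=\cdots=\lambda_{k+r-1}$. For every nodal domain $D$ of $f$, let $M_D$ denote the principal submatrix of $M$ on $D$; since $D$ is connected and the off-diagonal entries of $M_D$ are non-positive, Perron--Frobenius applied to $cI-M_D$ for $c$ sufficiently large produces a strictly positive eigenvector $\phi_D>0$ of $M_D$ whose eigenvalue $\mu_D$ is the smallest eigenvalue of $M_D$.

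The central quantitative input is the bound $\mu_D\leq\lambda_k$. Using $f|_D$ as a trial vector for $M_D$ and the eigenequation $Mf=\lambda_k f$, a one-line rearrangement yields
\[
\langle M_D(f|_D),\,f|_D\rangle \;=\; \lambda_k\,\|f|_D\|^2 \;-\;\sum_{x\in D,\,y\notin D} M_{xy}\,f(x)\,f(y).
\]
On every edge crossing the boundary of $D$ one has $M_{xy}<0$ and, by the very definition of a nodal domain, $f(x)f(y)\leq 0$ (strictly negative across strong boundaries; possibly zero across weak ones, where $y$ may lie in the zero set of $f$). Hence the correction term is non-negative, the Rayleigh quotient of $f|_D$ for $M_D$ is at most $\lambda_k$, and the variational characterization of $\mu_D$ gives $\mu_D\leq\lambda_k$.

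With $\mu_D\leq\lambda_k$ in hand, the next step is to use the Perron vectors $\phi_{D_1},\dots,\phi_{D_m}$ (extended by zero) as a test family and invoke an inertia argument on the matrix $M-\lambda_k I$: after a careful choice of signs reflecting the sign of $f$ on each domain, the test vectors span a subspace on which $M-\lambda_k I$ is non-positive, so its dimension cannot exceed the negative inertia plus the kernel dimension of $M-\lambda_k I$, which equals $k+r-1$. Applied to the $s$ strong nodal domains this directly gives $s\leq k+r-1$. For the weak bound $m\leq k$, the same construction must be tightened: weak nodal domains can overlap only on the zero set of $f$, and a detailed analysis shows that adjacent weak domains of opposite sign necessarily share at least one zero vertex, which is what one leverages to force the span of the weak test vectors into the strictly negative cone of $M-\lambda_k I$ of dimension $k-1$, with at most one further dimension absorbed by the $\lambda_k$-kernel.

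The hard part, unsurprisingly, is this sharpening from $k+r-1$ to $k$ in the weak case. The direct variational setup treats weak and strong domains uniformly and produces only the weaker bound in both cases; extracting the additional gain of $r-1$ requires exploiting the combinatorial rigidity of the zero set of $f$, together with the strict positivity of the Perron vectors, to exclude the possibility that the weak test vectors project non-trivially into the full $r$-dimensional $\lambda_k$-eigenspace. This is the step where the discrete nodal domain theorem genuinely diverges from its continuous ancestor, and where the bulk of the technical care in \cite{DGLS01} is needed.
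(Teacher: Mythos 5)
First, a framing remark: the paper does not prove this statement at all --- it is imported verbatim from \cite{DGLS01} --- so the comparison below is with the proof in that reference, not with anything in the present paper.

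Your argument for the strong bound has a genuine gap at its central step. You take the Perron vectors $\phi_{D_i}$ of the principal submatrices $M_{D_i}$ (extended by zero) and claim that, after a choice of signs, they span a subspace on which $M-\lambda_k I$ is non-positive, so that the inertia count $n_-+n_0=k+r-1$ finishes the proof. That non-positivity claim is false. In the basis $\{\phi_{D_i}\}$ the Gram matrix of $M-\lambda_k I$ has diagonal entries $(\mu_{D_i}-\lambda_k)\|\phi_{D_i}\|^2\le 0$ and off-diagonal entries $\langle\phi_{D_i},M\phi_{D_j}\rangle\le 0$; flipping signs of basis vectors does not change the span, and a symmetric matrix with all entries $\le 0$ need not be negative semidefinite. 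Concretely, let
\begin{equation*}
M=\begin{pmatrix}-3 & -1 & -4\\ -1 & 1 & 0\\ -4 & 0 & -4\end{pmatrix},
\end{equation*}
a generalized Laplacian of the path $2\,$--$\,1\,$--$\,3$. Then $f=(1,1,-1)^{T}$ satisfies $Mf=0$, the spectrum of $M$ is $\{-3-\sqrt{21},\,0,\,-3+\sqrt{21}\}$, so $0$ is the second eigenvalue ($k=2$, $r=1$), and $f$ has strong nodal domains $D_1=\{1,2\}$, $D_2=\{3\}$. Here $\mu_{D_1}=-1-\sqrt5$, $\phi_{D_1}=(1,\sqrt5-2,0)^T$, $\phi_{D_2}=(0,0,1)^T$, and the Gram matrix of $\langle\cdot,M\cdot\rangle$ on $\mathrm{span}\{\phi_{D_1},\phi_{D_2}\}$ is $\bigl(\begin{smallmatrix}10-6\sqrt5 & -4\\ -4 & -4\end{smallmatrix}\bigr)$, with determinant $24\sqrt5-56<0$: the form is indefinite on your test subspace, so the dimension bound cannot be invoked. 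The proof in \cite{DGLS01} uses instead the restrictions $f_i=f\cdot\mathbf{1}_{D_i}$. Because $Mf=\lambda_k f$ and $f$ vanishes off $\bigcup_i D_i$, one has the exact identity $\langle f_i,Mf_i\rangle=\lambda_k\|f_i\|^2-\sum_{j\ne i}\langle f_i,Mf_j\rangle$ with $\langle f_i,Mf_j\rangle\ge 0$ (cross edges join domains of opposite sign), whence $\langle g,Mg\rangle=\lambda_k\|g\|^2-\tfrac12\sum_{i\ne j}(c_i-c_j)^2\langle f_i,Mf_j\rangle\le\lambda_k\|g\|^2$ for \emph{every} $g=\sum c_i f_i$. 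It is this row-sum cancellation coming from the eigenvalue equation --- not Perron positivity --- that puts the whole span inside the non-positive cone. Your inequality $\mu_D\le\lambda_k$ is correct but is not the ingredient that drives the argument.

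The weak half of your write-up is not a proof: it defers exactly the step you identify as hard to ``a detailed analysis,'' and the one concrete assertion it makes is wrong --- two adjacent weak nodal domains of opposite sign need not share a zero vertex (a single edge with values $+1$ and $-1$ already gives two adjacent weak domains containing no zeros). In \cite{DGLS01} the bound $\mathrm{WND}(f)\le k$ comes from the same restriction-based subspace: assuming $m\ge k+1$ weak domains, one picks $0\ne g=\sum c_if_i$ orthogonal to the first $k-1$ eigenvectors and with a prescribed coefficient equal to zero; then the Rayleigh quotient of $g$ is pinched to $\lambda_k$, so $g$ is a $\lambda_k$-eigenfunction vanishing on an entire domain, and the eigenvalue equation at a boundary vertex combined with connectedness yields a contradiction. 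Finally, a minor slip: across a strong boundary one only has $f(x)f(y)\le 0$, not $<0$, since a neighbour of a strong nodal domain may lie in the zero set of $f$.
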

Theorem \ref{Th:NDT} is a discrete analogue of Courant's nodal domain theorem for elliptic
operators on manifolds.
Various versions of Theorem \ref{Th:NDT} with different proofs can be found in \cite{Colin93,DR99,F93,P88,H96}, 
and detailed discussions of these versions can be found in \cite{BLS07,DGLS01}.
The results of Gantmacher and Krein on oscillation matrices \cite{GK} imply that the eigenvalues of a generalized Laplacian of a path are all simple, and that
any eigenfunction corresponding to the $k$-th eigenvalue has exactly $k$ weak nodal domains and $k$ strong nodal domains.
Therefore, both bounds from Theorem \ref{Th:NDT} cannot be improved without additional assumptions.
On the other hand, these bounds are not sharp for all graphs. 
In particular, they have been improved for trees \cite{B03}, cographs \cite{BLS07} and hypercubes \cite{BHLPS04}.
Non-trivial lower bounds on the number of strong nodal domains for eigenfunctions of generalized Laplacians 
can be found in \cite{B08,XY12}.
Recently, Ge and Liu \cite{GL23} extended Theorem \ref{Th:NDT} and the lower bounds from \cite{B08,XY12} to arbitrary symmetric matrices.
For more information on Theorem \ref{Th:NDT} and related questions we refer the reader to \cite{BLS07}.

The {\em Hamming graph} $H(n,q)$ is defined as follows.
The vertex set of $H(n,q)$ is $\mathbb{Z}_{q}^n$, and two vertices are adjacent if they differ in exactly one coordinate.
The Hamming graph $H(n,2)$ is also known as the $n$-dimensional hypercube or $n$-cube.
The Laplacian matrix of $H(n,q)$ has $n+1$ distinct eigenvalues $\lambda_i(n,q)=q\cdot i$ 
with corresponding multiplicities $\binom{n}{i}\cdot (q-1)^i$, where $0\leq i\leq n$.
An eigenfunction of $H(n,q)$ is an eigenfunction of its Laplacian.

In 2004, B\i y\i ko\u{g}lu, Hordijk, Leydold, Pisanski and Stadler \cite{BHLPS04} initiated the study of the following two problems for eigenfunctions of hypercubes.

\begin{problem}\label{Problem:1}
Let $1\leq i\leq n-1$.
Find the minimum number of weak nodal domains of an eigenfunction of $H(n,2)$ corresponding to the eigenvalue $2i$.
\end{problem}

\begin{problem}\label{Problem:2}
Let $1\leq i\leq n-1$.
Find the minimum number of strong nodal domains of an eigenfunction of $H(n,2)$ corresponding to the eigenvalue $2i$.
\end{problem}

In the case of weak nodal domains they proved the following result.

\begin{theorem}[\cite{BHLPS04}, Theorem 3]\label{Th:Weak-H}
For every $1\leq i\leq n-1$, there is an eigenfunction $f$ of $H(n,2)$ 
with eigenvalue $2i$ such that $\mathrm{WND}(f)=2$.   
\end{theorem}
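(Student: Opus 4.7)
The plan is to exhibit, for every $1 \leq i \leq n-1$, an explicit eigenfunction of $H(n,2)$ with eigenvalue $2i$ whose positive (resp.\ negative) support, together with its zero set, forms a single connected subgraph. Recall that the $2i$-eigenspace of the Laplacian is spanned by the characters $\chi_S(x) = (-1)^{\sum_{j \in S} x_j}$ with $|S| = i$. The idea is to take a sum of two such characters whose symmetric difference has size exactly two: the resulting cancellation produces a very large zero set, which will be used as a bridge to merge the many sign components that a single character would otherwise leave disconnected.

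\noindent\textbf{Construction.} Since $i - 1 \leq n - 2$, pick distinct indices $a, b \in \{1, \ldots, n\}$ and a set $C \subseteq \{1, \ldots, n\} \setminus \{a, b\}$ with $|C| = i - 1$, and define
\[
f(x) \;=\; \chi_{\{a\} \cup C}(x) + \chi_{\{b\} \cup C}(x) \;=\; \chi_C(x) \cdot \bigl((-1)^{x_a} + (-1)^{x_b}\bigr).
\]
As a sum of two eigenfunctions with eigenvalue $2i$, $f$ is an eigenfunction with eigenvalue $2i$. The second factor equals $\pm 2$ when $x_a = x_b$ and $0$ when $x_a \neq x_b$; hence the zero set $Z = \{x : x_a \neq x_b\}$ decomposes into two $(n-2)$-dimensional subcubes $Z_1 = \{x_a = 1,\, x_b = 0\}$ and $Z_2 = \{x_a = 0,\, x_b = 1\}$, each internally connected.

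\noindent\textbf{Connectivity of the weak nodal domains.} Let $p(x)$ denote the parity of $\sum_{j \in C} x_j$. A direct computation gives $\{f > 0\} = P_+ \cup P_-$ with $P_+ = \{x : x_a = x_b = 0,\ p(x) = 0\}$ and $P_- = \{x : x_a = x_b = 1,\ p(x) = 1\}$. I will show that $\{f \geq 0\} = P_+ \cup P_- \cup Z$ is connected; the case $\{f \leq 0\}$ is symmetric. From any vertex $z \in Z_1$, flipping $x_a$ crosses to $\{x_a = x_b = 0\}$ without changing $p$, landing in $P_+$ exactly when $p(z) = 0$; flipping $x_b$ instead crosses to $\{x_a = x_b = 1\}$, landing in $P_-$ exactly when $p(z) = 1$. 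Consequently every component of $P_+$ and of $P_-$ has a neighbour in $Z_1$, and analogously in $Z_2$. Since each $Z_k$ is connected, the whole $\{f \geq 0\}$ collapses into a single component; it contains the all-zero vertex, which belongs to $P_+$, so it is a positive weak nodal domain. The symmetric argument handles the negative side, and $\mathrm{WND}(f) = 2$.

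\noindent\textbf{Main obstacle.} The subtle point is that for $i \geq 2$ the set $\{f > 0\}$ is itself highly disconnected: $P_+$ splits into $2^{i-2}$ subcube components indexed by the even-weight values of $x_C$, and $P_-$ splits into another $2^{i-2}$ components. Thus connectivity of the weak nodal domain is carried entirely by the zero set. The technical heart of the argument is to verify that \emph{both} $Z_1$ and $Z_2$ are simultaneously adjacent to \emph{every} component of \emph{both} $P_+$ and $P_-$, which is what fuses all these fragments into one weak nodal domain of each sign.
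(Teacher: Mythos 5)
Your proof is correct. Note, however, that the paper you are being compared against does not actually prove this statement: Theorem~\ref{Th:Weak-H} is quoted verbatim from B\i y\i ko\u{g}lu et al.\ \cite{BHLPS04} and used as a black box, so there is no in-paper argument to match yours against. Judged on its own terms, your construction works: $\chi_{\{a\}\cup C}$ and $\chi_{\{b\}\cup C}$ are distinct characters with $|\{a\}\cup C|=|\{b\}\cup C|=i$, hence $2i$-eigenfunctions of the Laplacian of $H(n,2)$, and their sum is nonzero; the factorization $f=\chi_C\cdot((-1)^{x_a}+(-1)^{x_b})$ correctly identifies the zero set $Z=Z_1\cup Z_2$ and the sign sets; each $Z_k$ is a subcube, hence connected; every vertex of $P_+\cup P_-$ is joined to $Z_1$ (and to $Z_2$) by flipping coordinate $a$ or $b$, and every vertex of $Z_2$ has a neighbour in $P_+\cup P_-$, so $\{f\geq 0\}$ is connected and contains a strictly positive vertex; the negative side is symmetric, giving $\mathrm{WND}(f)=2$. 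The condition $i\leq n-1$ is exactly what allows choosing $C$ disjoint from $\{a,b\}$, and the edge case $i=1$ (where $P_-=\emptyset$) still goes through. Your closing remark is also accurate and worth keeping: for $i\geq 2$ the set $S_+(f)$ breaks into $2^{i-1}$ subcube components, which is precisely why this cheap construction says nothing about strong nodal domains and why the paper has to work much harder (via equitable partitions of type A and tensor products with $\varphi_1,\varphi_2$) to attack Conjecture~\ref{Conj:Strong-H}.
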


For strong nodal domains the situation is much more complicated.
In this case, B\i y\i ko\u{g}lu et al. \cite{BHLPS04} proved the following results.

\begin{theorem}[\cite{BHLPS04}, Theorem 4]\label{Th:Strong-H}
For every $1\leq i\leq \frac{n}{2}$, there is an eigenfunction $f$ of $H(n,2)$
with eigenvalue $2i$ such that $\mathrm{SND}(f)=2$.   
\end{theorem}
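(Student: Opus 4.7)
I will exhibit, by induction on $i$, an explicit eigenfunction $f$ of $H(n,2)$ with eigenvalue $2i$ whose positive and negative value-sets each induce a connected subgraph. Since an eigenfunction for eigenvalue $2i>0$ is orthogonal to the constant function and hence takes values of both signs, $\mathrm{SND}(f)\geq 2$ automatically, so two connected sign-sets force $\mathrm{SND}(f)=2$. The base case $i=1$ is immediate: take $f(x)=(-1)^{x_1}$, whose positive set $\{x:x_1=0\}$ and negative set $\{x:x_1=1\}$ are $(n-1)$-dimensional subcubes of $H(n,2)$ and therefore connected.

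For the inductive step, observe that $i\leq n/2$ is equivalent to $i-1\leq(n-2)/2$, so by induction there is an eigenfunction $g\colon\{0,1\}^{n-2}\to\mathbb{R}$ of $H(n-2,2)$ with eigenvalue $2(i-1)$ and $\mathrm{SND}(g)=2$. Viewing $H(n,2)$ as the Cartesian product of $H(n-2,2)$ and $H(2,2)$ on the last two coordinates $(x_{n-1},x_n)$, and using that eigenvalues add across a Cartesian product, any tensor product $g'\cdot h$ with $g'$ of eigenvalue $2(i-1)$ in $H(n-2,2)$ and $h$ of eigenvalue $2$ in $H(2,2)$ yields an eigenfunction of $H(n,2)$ of eigenvalue $2i$. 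I propose
\[
f(x) = g(x_1,\dots,x_{n-2})\cdot\bigl((-1)^{x_{n-1}}+(-1)^{x_n}\bigr) + \tilde g(x_1,\dots,x_{n-2})\cdot\bigl((-1)^{x_{n-1}}-(-1)^{x_n}\bigr),
\]
where $\tilde g$ is a second eigenfunction of $H(n-2,2)$ of eigenvalue $2(i-1)$, obtained from $g$ by a suitable symmetry of $H(n-2,2)$ such as a coordinate permutation. Because $(-1)^{x_{n-1}}+(-1)^{x_n}$ vanishes on the quadrants $(0,1),(1,0)$ and $(-1)^{x_{n-1}}-(-1)^{x_n}$ vanishes on the quadrants $(0,0),(1,1)$, the restriction of $f$ to each of the four quadrants of $(x_{n-1},x_n)$ is a nonzero multiple of either $g$ or $\tilde g$; the positive and negative value-sets of $f$ in each quadrant are therefore translates of the (connected) positive or negative value-sets of $g$ or of $\tilde g$.

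The main obstacle is choosing $\tilde g$ and verifying that the four quadrant-copies are glued along the $x_{n-1}$- and $x_n$-edges into two single connected components rather than several. This amounts to showing that for every pair of adjacent quadrants there is at least one base point $x'\in\{0,1\}^{n-2}$ at which both corresponding restrictions are positive (respectively negative), so that the edge between the two copies of $x'$ at those quadrants lies in the positive (resp.\ negative) set of $f$. The symmetries of $g$ inherited from the inductive construction are used to arrange these overlaps. The hypothesis $i\leq n/2$ enters in two crucial ways: it activates the inductive hypothesis on $H(n-2,2)$, and it ensures that the $2i$-eigenspace of $H(n,2)$ is rich enough to contain a companion $\tilde g$ with the required overlap structure. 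Once the connectivity of the positive and negative sets of $f$ is verified, one concludes $\mathrm{SND}(f)=2$.
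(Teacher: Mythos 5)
The paper does not prove this statement itself---it quotes it from \cite{BHLPS04}---so the natural comparison is with the proof of the stronger Theorem \ref{Th:1}, which uses exactly your gluing device: since $(-1)^{x_{n-1}}+(-1)^{x_n}=2\varphi_1(x_{n-1},x_n)$ and $(-1)^{x_{n-1}}-(-1)^{x_n}=2\varphi_2(x_{n-1},x_n)$, your candidate is $f=2\,(g\otimes\varphi_1+\tilde g\otimes\varphi_2)$, the same shape as the function $g\otimes\varphi_1+h\otimes\varphi_2$ used there. Your eigenvalue bookkeeping, the base case, the reduction $i\le n/2\Leftrightarrow i-1\le(n-2)/2$, and the description of $S_+(f)$ as the four quadrant pieces $S_+(g)\times\{(0,0)\}$, $S_-(g)\times\{(1,1)\}$, $S_+(\tilde g)\times\{(0,1)\}$, $S_-(\tilde g)\times\{(1,0)\}$ are all correct.

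The gap is that your argument stops exactly where the real work begins: $\tilde g$ is never specified and the overlap conditions are never verified. The quadrants $(0,0)$ and $(1,1)$ are not adjacent in $H(2,2)$, and neither are $(0,1)$ and $(1,0)$, so the four pieces sit on a $4$-cycle and connectivity of $S_+(f)$ forces at least three of the four intersections $S_{\pm}(g)\cap S_{\pm}(\tilde g)$ to be nonempty. This is a genuine constraint, not a formality: taking $\tilde g=g$ gives $f=2\,g\otimes(\varphi_1+\varphi_2)$, i.e.\ $f(x)=2g(x_1,\dots,x_{n-2})(-1)^{x_{n-1}}$, for which $S_+(g)\cap S_-(\tilde g)=S_-(g)\cap S_+(\tilde g)=\emptyset$ and $\mathrm{SND}(f)=4$. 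Appealing to ``a suitable coordinate permutation'' or to the eigenspace being ``rich enough'' is an assertion, not an argument; to close the induction you would need to strengthen the inductive hypothesis so that it hands you, at every level, a pair of isomorphic eigenfunctions $g,\tilde g$ with the required sign-set intersections, and then verify that the glued function again admits such a companion. This is precisely the role played in the paper's proof of Theorem \ref{Th:1} by Lemma \ref{L:Translation}: there $\tilde g$ is the translate $h(x)=g(x+e_t)$ of the associated function of a type-A equitable partition, and the statements $C_1\subset C_4$, $C_3\subset C_2$, $C_2\cap C_4\neq\emptyset$ are exactly the overlap facts your sketch leaves unproved. Until a concrete $\tilde g$ with verified overlaps is exhibited, the proof is incomplete.
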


\begin{theorem}[\cite{BHLPS04}, Theorem 5]\label{Th:Strong-N-1}
For any eigenfunction $f$ of $H(n,2)$, $n\geq 3$, with eigenvalue $2(n-1)$ we have $\mathrm{SND}(f)\geq n$.   
\end{theorem}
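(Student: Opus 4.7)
The plan is to exploit the explicit structure of the $2(n-1)$-eigenspace of $H(n,2)$. This eigenspace is spanned by the $n$ characters $\chi_{\{1,\ldots,n\}\setminus\{i\}}(x)=(-1)^{\sum_{j\neq i}x_j}$ for $i=1,\ldots,n$, so every eigenfunction can be written as $f(x)=(-1)^{|x|}g(x)$, where $g(x)=\sum_{i=1}^n a_i(-1)^{x_i}=A-2\langle a,x\rangle$ (with $A=\sum_i a_i$) is affine linear in $x\in\{0,1\}^n$ and is itself an eigenfunction of eigenvalue~$2$. Coordinate flips $x_i\mapsto 1-x_i$ and coordinate permutations are graph automorphisms of $H(n,2)$, so we may normalize $a_1\geq a_2\geq\cdots\geq a_n\geq 0$ with $a_1>0$.

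The key local criterion is the following: for any edge $\{x,y\}$ with $y=x\oplus e_k$ the parities of $|x|$ and $|y|$ differ, so $f(x)f(y)=-g(x)g(y)$; hence $\{x,y\}$ lies inside a single strong nodal domain if and only if $g(x)$ and $g(y)$ have \emph{opposite} strict signs. Since $g$ changes by $\pm 2a_k$ across a direction-$k$ edge, such an ``intra-nodal'' edge is possible only when $|g(x)|<2a_k$. Consequently every nontrivial strong nodal domain is confined to the slab $\{x:|g(x)|<2a_1\}$; in particular, if $a_1\leq A/2$, then the vertex $\mathbf 0$ (with $g(\mathbf 0)=A$ and each neighbor $e_k$ satisfying $g(e_k)=A-2a_k$) is an isolated singleton positive strong nodal domain, and symmetrically for $\mathbf 1$.

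The main step is to produce $n$ pairwise distinct strong nodal domains, which we plan to handle by induction on $n\geq 3$. The base case $n=3$ can be verified by a direct case analysis on the normalized triples $(a_1,a_2,a_3)$. For the inductive step we split $H(n,2)\cong H(n-1,2)\,\Box\,K_2$ along the $n$-th coordinate; writing $\tilde x=(x_1,\ldots,x_{n-1})$ and $\tilde g(\tilde x)=\sum_{i<n}a_i(-1)^{x_i}$, one computes $f(\tilde x,c)=(-1)^{|\tilde x|}\bigl((-1)^c\tilde g(\tilde x)+a_n\bigr)$ for $c\in\{0,1\}$. When $a_n=0$, the two half-cube restrictions are negatives of each other and coincide (up to sign) with the eigenfunction $(-1)^{|\tilde x|}\tilde g(\tilde x)$ of $H(n-1,2)$ of eigenvalue $2(n-2)$; the local criterion further shows that every cross-coordinate edge in $H(n,2)$ is sign-reversing, so $\mathrm{SND}(f)=2\,\mathrm{SND}_{H(n-1,2)}(f|_{\{x_n=0\}})\geq 2(n-1)\geq n$ by the induction hypothesis.

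The principal obstacle will be the case $a_n>0$: then $f|_{\{x_n=c\}}$ is no longer an eigenfunction of a single eigenvalue but rather a sum of eigenfunctions of the two largest eigenvalues $2(n-2)$ and $2(n-1)$ of $H(n-1,2)$. A direct application of the local criterion identifies the intra-nodal cross-coordinate edges as precisely those ``portal'' columns $\tilde x$ satisfying $|\tilde g(\tilde x)|<a_n$, so the two half-cubes interact through a controlled set of columns. We plan to resolve this case by either strengthening the induction hypothesis to cover sums of eigenfunctions of the two top eigenvalues (tracking both the coefficient vector $a$ and the scalar shift $a_n$), or by a perturbation argument within the eigenspace: perturb the $a_i$ to generic values so that the slab picture of Step~2 together with the extremal vertices $\mathbf 0$ and $\mathbf 1$ immediately produces $n$ essentially isolated nodal domains at distinct scales, and then pass to the limit by lower semicontinuity of $\mathrm{SND}$ under sign-pattern-preserving deformations.
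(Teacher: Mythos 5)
First, note that the paper does not prove this statement at all: Theorem~\ref{Th:Strong-N-1} is quoted from \cite{BHLPS04} (their Theorem~5) and used as background, so there is no in-paper proof to compare yours against; your attempt has to stand on its own.

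Your setup is correct and is the right way to think about this eigenspace: every $2(n-1)$-eigenfunction is $f(x)=(-1)^{|x|}g(x)$ with $g$ affine, the normalization $a_1\geq\cdots\geq a_n\geq 0$ is legitimate, and your local criterion (a direction-$k$ edge is intra-domain iff $g$ changes strict sign across it, which forces $|g(x)|<2a_k$) is exactly the reduction of $\mathrm{SND}(f)$ to counting components of the ``cut graph'' of the hyperplane $\langle a,x\rangle=A/2$. The case $a_n=0$ is also handled correctly. But the proof is not complete: the main case, where all $a_i>0$, is precisely where the content of the theorem lies, and neither of your two proposed resolutions works as stated. The strengthened induction (for functions of the form $(-1)^{|\tilde x|}(\tilde g(\tilde x)+s)$) is never formulated, let alone proved. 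The perturbation argument is worse than incomplete --- it is flawed: if $f_\varepsilon\to f$ within the eigenspace, then $S_+(f)\subseteq S_+(f_\varepsilon)$ and $S_-(f)\subseteq S_-(f_\varepsilon)$ for small $\varepsilon$, so passing from $f$ to a generic $f_\varepsilon$ can only \emph{add} vertices to the induced subgraphs, which may merge domains of $f$ or create new singleton domains among the formerly zero vertices; consequently a lower bound on $\mathrm{SND}(f_\varepsilon)$ for generic coefficients says nothing about $\mathrm{SND}(f)$. (Your caveat ``sign-pattern-preserving deformations'' makes the limit step vacuous, since a deformation that preserves the full sign pattern, zeros included, does not change $\mathrm{SND}$ at all and cannot reach generic coefficients.) The base case $n=3$ is also only asserted. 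So what you have is a correct and useful reformulation plus an honest inventory of the remaining difficulty, not a proof; to finish it you would need to actually carry out the count of components of the cut graph for arbitrary positive weights, which is the substance of the argument in \cite{BHLPS04}.
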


Based on computational experiments, B\i y\i ko\u{g}lu et al. \cite{BHLPS04} conjectured that Theorem \ref{Th:Strong-H} can be extended as follows.

\begin{conjecture}[\cite{BHLPS04}, Conjecture 1]\label{Conj:Strong-H}
For every $1\leq i\leq n-2$, there is an eigenfunction $f$ of $H(n,2)$ 
with eigenvalue $2i$ such that $\mathrm{SND}(f)=2$.  
\end{conjecture}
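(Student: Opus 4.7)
My plan is to construct, for each $(n,i)$ with $1\le i\le n-2$, an eigenfunction $f$ of $H(n,2)$ with eigenvalue $2i$ such that both $\{f>0\}$ and $\{f<0\}$ are connected induced subgraphs.

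First, a reduction: if $f$ is such an eigenfunction of $H(n,2)$, then the lift $\tilde f(x_1,\ldots,x_{n+1}):=f(x_1,\ldots,x_n)$ is again an eigenfunction of $H(n+1,2)$ with eigenvalue $2i$, and $\mathrm{SND}(\tilde f)=2$: flipping the extra coordinate preserves sign, so each nodal domain of $f$ lifts to a connected subgraph of $H(n+1,2)$, the two halves being joined across $x_{n+1}$ by edges. Hence it suffices to treat the minimal case $n=i+2$.

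For the construction I would restrict to eigenfunctions depending only on block weights. Partition $[n]=A\sqcup B$ with $|A|=a,\,|B|=b$, and write $w_A:=|x|_A,\,w_B:=|x|_B$. The eigenfunctions with eigenvalue $2i$ that factor through $(w_A,w_B)$ are spanned by $K_{i_1}^{(a)}(w_A)\,K_{i_2}^{(b)}(w_B)$ for $i_1+i_2=i$, where $K_j^{(m)}$ is the $j$-th Krawtchouk polynomial, so any
\[
f(x) \;=\; \sum_{i_1+i_2=i} c_{i_1,i_2}\, K_{i_1}^{(a)}(w_A)\, K_{i_2}^{(b)}(w_B)
\]
is automatically an eigenfunction. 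Under the projection $\pi:H(n,2)\to\{0,\ldots,a\}\times\{0,\ldots,b\}$, $x\mapsto(w_A,w_B)$, every edge of $H(n,2)$ projects to a grid edge and each fiber is an independent set of size $\binom{a}{w_A}\binom{b}{w_B}$. Thus $\{f>0\}$ is connected in $H(n,2)$ iff the induced grid set $\{F>0\}$ is connected \emph{and} every cell in it with fiber size greater than one has a neighboring grid cell of the same sign (so two vertices of a common fiber can be joined by a brief excursion to a same-sign neighbor); analogously for $\{f<0\}$. The strategy is then to tune the $c_{i_1,i_2}$ so that $\{F>0\}$ and $\{F<0\}$ form connected staircase regions separated by a thin diagonal of zeros, each satisfying this neighbor property.

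The main obstacle is the feasibility of the prescribed sign pattern: the coefficient space has only $\min(i,a,b)+1$ dimensions while the pattern constrains $(a+1)(b+1)$ grid cells, so realizing it requires precise control of the zero loci of linear combinations of Krawtchouk polynomials, likely via identities among consecutive Krawtchouks together with a counting/dimension argument. Theorem~\ref{Th:Strong-N-1} shows the method must fail at $i=n-1$, which is a useful sharpness check. For $i$ close to $n-2$ the two-block ansatz may not provide enough freedom, in which case I would fall back to a three-block partition, or add a non-symmetric perturbation lying in the $2i$-eigenspace to repair any isolated cells without destroying the large-scale structure.
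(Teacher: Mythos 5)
Your opening reduction (lifting $f$ to $H(n+1,2)$ preserves the eigenvalue and $\mathrm{SND}=2$) is correct and is exactly the paper's Corollary~\ref{Cor:N-E}. Everything after that, however, is a plan rather than a proof, and the plan stops precisely where the difficulty begins. The statement you are addressing is a conjecture that the paper does \emph{not} resolve in full: Theorem~\ref{Th:1} confirms it only for $i$ up to roughly $\tfrac{2n}{3}$ (namely $n\geq 3k+2$ for $i=2k+1$ and $n\geq 3k+4$ for $i=2k+2$), and the authors state that the remaining range seems to require a fundamentally new approach. Your reduction to the minimal case $n=i+2$ therefore targets exactly the extremal instances that remain open. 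The step that would settle them --- exhibiting coefficients $c_{i_1,i_2}$ for which the two-block Krawtchouk combination has two connected sign regions, and verifying that the prescribed sign pattern is actually attained --- is never carried out; you yourself flag it as the ``main obstacle'' and offer only fallbacks (three blocks, non-symmetric perturbations). With only $\min(i,a,b)+1$ free parameters against $(a+1)(b+1)$ sign constraints, nothing in the proposal shows that any admissible pattern exists, so there is no proof. For comparison, the paper's partial result uses a different symmetry: associated functions of equitable $2$-partitions of type A (block \emph{parities}, via the Fon-Der-Flaass multiplication construction), their translates by $e_t$, and sums of tensor products with the explicit eigenfunctions $\varphi_1,\varphi_2$ of $H(2,2)$, with connectivity checked through Lemmas~\ref{L:Translation} and~\ref{L:Connectivity}.

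There is also a concrete error in the one step you do argue: the connectivity criterion on the weight grid is false as stated. Take $a=2$, $b=1$ and suppose $F>0$ exactly on the cells $(1,0)$ and $(1,1)$. The grid set is connected and every cell of fiber size greater than one has a same-sign neighbour, yet the subgraph of $H(3,2)$ induced by the union of the two fibers is two disjoint edges, $\{(1,0,0),(1,0,1)\}$ and $\{(0,1,0),(0,1,1)\}$: an excursion to the same-sign neighbour only changes the $B$-coordinate and can never permute the $A$-part. To move within a fiber one needs same-sign neighbours in the relevant block direction (in both directions if both block weights are intermediate), so your ``iff'' must be strengthened before it can certify $\mathrm{SND}(f)=2$. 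This is likely repairable for genuine staircase regions, but as written the criterion admits disconnected nodal domains.
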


In this work, we confirm Conjecture \ref{Conj:Strong-H} for $i\leq \frac{2}{3}(n-\frac{1}{2})$ if $i$ is odd
and for $i\leq \frac{2}{3}(n-1)$ if $i$ is even.
For $q\geq 3$, we consider the following natural generalization of Problem \ref{Problem:2}.

\begin{problem}\label{Problem:3}
Let $q\geq 3$ and $1\leq i\leq n$.
Find the minimum number of strong nodal domains of an eigenfunction of $H(n,q)$ corresponding to the eigenvalue $q\cdot i$.
\end{problem}

In this work, we solve Problem \ref{Problem:3} for all $q\geq 3$ except for the case $q=3$ and $i=n$.
Namely, we prove that for every $q\geq 3$ and every $1\leq i\leq n-1$ there is an eigenfunction $f$ of $H(n,q)$ 
with eigenvalue $q\cdot i$ such that $\mathrm{SND}(f)=2$.
We also show that for every $q\geq 4$ there is an eigenfunction $f$ of $H(n,q)$ 
with eigenvalue $q\cdot n$ such that $\mathrm{SND}(f)=2$.

The paper is organized as follows. In Section \ref{Sec:Def}, we introduce basic definitions.
In Section \ref{Sec:Prelim}, we give preliminary results.
In Section \ref{Sec:EP}, we provide background on equitable 2-partitions of Hamming graphs.
In Section \ref{Sec:Small-Eigenfunctions}, we define two special eigenfunctions of $H(2,2)$ and three special eigenfunctions of $H(3,3)$.
In Section \ref{Sec:q=2}, we confirm Conjecture \ref{Conj:Strong-H} for $i\leq \frac{2}{3}(n-\frac{1}{2})$ if $i$ is odd
and for $i\leq \frac{2}{3}(n-1)$ if $i$ is even.
In Section \ref{Sec:q=3}, we prove that for every $n\geq 2$ and every $1\leq i\leq n-1$ there is an eigenfunction $f$ of $H(n,3)$ 
with eigenvalue $3i$ such that $\mathrm{SND}(f)=2$.
In Section \ref{Sec:q>=4}, we prove that for every $q\geq 4$ and every $1\leq i\leq n$ there is an eigenfunction $f$ of $H(n,q)$ 
with eigenvalue $q\cdot i$ such that $\mathrm{SND}(f)=2$.
In Section \ref{Sec:Conclusion}, we discuss open problems and future directions.

\section{Basic definitions}\label{Sec:Def}

For a function $f:X\rightarrow \mathbb{R}$, denote
$S_+(f)=\{x\in X:f(x)>0\}$ and $S_-(f)=\{x\in X:f(x)<0\}$.

The {\em Laplacian matrix} of a graph $G$, denoted by $L$, is defined by $L=D-A$, 
where $D$ is the diagonal matrix formed from the vertex degrees of $G$
and $A$ is the adjacency matrix of $G$.

Let $G$ be a graph with vertex set $V$ and let $\lambda$ be an eigenvalue of its Laplacian matrix.
A function $f:V\rightarrow \mathbb{R}$ is called a {\em $\lambda$-eigenfunction} of $G$ if $f\not\equiv 0$ and the equality
\begin{equation}\label{Eq:Eigenfunction}
\lambda\cdot f(x)=\sum_{y\in{N(x)}}(f(x)-f(y))
\end{equation}
holds for any vertex $x\in V$, where $N(x)$ is the set of all neighbors of $x$ in $G$.

Let $G_1=(V_1,E_1)$ and $G_2=(V_2,E_2)$ be two graphs.
The {\em Cartesian product} $G_1\square G_2$ of graphs $G_1$ and $G_2$ is defined as follows.
The vertex set of $G_1\square G_2$ is $V_1\times V_2$;
and any two vertices $(x_1,y_1)$ and $(x_2,y_2)$ are adjacent if and only if either
$x_1=x_2$ and $y_1$ is adjacent to $y_2$ in $G_2$, or
$y_1=y_2$ and $x_1$ is adjacent to $x_2$ in $G_1$.

Suppose $G_1=(V_1,E_1)$ and $G_2=(V_2,E_2)$ are two graphs. Let $f_1:V_1\rightarrow{\mathbb{R}}$ and $f_2:V_2\rightarrow{\mathbb{R}}$.
Denote $G=G_1\square G_2$.
We define the {\em tensor product} $f_1\otimes f_2$  on the vertices of $G$ by the following rule:
$$(f_1\otimes f_2)(x,y)=f_1(x)f_2(y)$$ for $(x,y)\in V(G)=V_1\times V_2$.

For a positive integer $n$, denote $[n]=\{1,\ldots,n\}$.
For $t\in [n]$, we denote by $e_t$ the tuple from $\mathbb{Z}_q^n$ whose $i$-th entry is equal to $0$ if $i\neq t$ and equal to $1$ if $i=t$.
For a set $X\subseteq \mathbb{Z}_q^n$ and $t\in [n]$, denote $X+e_t=\{x+e_t:x\in X\}$, where $+$ means addition in $\mathbb{Z}_q^n$.

Let $k\in [n]$ and let $a_1,\ldots,a_k\in \mathbb{Z}_q$.
For a function $f$ on $\mathbb{Z}_q^n$, we denote by $f_{a_1,\ldots,a_k}$ its restriction to the set
$\{x\in \mathbb{Z}_q^n:x_{n-k+1}=a_1,x_{n-k+2}=a_2,\ldots,x_n=a_k\}$.

In what follows, we will often consider sums of the form $x_{i_1}+x_{i_2}+\cdots+ x_{i_k}$, 
where $x_{i_1},x_{i_2},\ldots,x_{i_k}$ are entries of a vertex of $H(n,q)$. In all cases $+$ means addition in $\mathbb{Z}_q$.

\section{Preliminaries}\label{Sec:Prelim}
The following fact about the Cartesian product of graphs is well-known (for example, see \cite[Rule 7.9]{N18}).
\begin{lemma}\label{L:Product}
Let $G_1$ and $G_2$ be two graphs. If $f_1$ is a $\lambda$-eigenfunction of $G_1$ and $f_2$ is a $\mu$-eigenfunction of $G_2$, then
$f_1\otimes f_2$ is a ($\lambda+\mu$)-eigenfunction of $G_1\square G_2$.
\end{lemma}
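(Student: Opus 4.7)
The plan is to verify the eigenfunction equation \eqref{Eq:Eigenfunction} for $f_1\otimes f_2$ at an arbitrary vertex $(x,y)$ of $G_1\square G_2$ by a direct computation that uses the definition of the Cartesian product to split the neighborhood of $(x,y)$ into two pieces.

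First, I would unpack the neighborhood structure: by definition of $G_1\square G_2$, the neighbors of $(x,y)$ are precisely the vertices $(x',y)$ with $x'\in N_{G_1}(x)$ together with the vertices $(x,y')$ with $y'\in N_{G_2}(y)$, and these two sets are disjoint. I would then write
\[
\sum_{(u,v)\in N((x,y))}\bigl((f_1\otimes f_2)(x,y)-(f_1\otimes f_2)(u,v)\bigr)
\]
as the sum of the contributions over these two sets, and factor out $f_2(y)$ from the first sum and $f_1(x)$ from the second, getting
\[
f_2(y)\sum_{x'\in N_{G_1}(x)}\bigl(f_1(x)-f_1(x')\bigr)+f_1(x)\sum_{y'\in N_{G_2}(y)}\bigl(f_2(y)-f_2(y')\bigr).
\]

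Next, applying \eqref{Eq:Eigenfunction} to $f_1$ at $x$ and to $f_2$ at $y$, the two inner sums become $\lambda f_1(x)$ and $\mu f_2(y)$ respectively. Collecting terms gives $(\lambda+\mu)f_1(x)f_2(y)=(\lambda+\mu)(f_1\otimes f_2)(x,y)$, which is exactly the eigenfunction equation at $(x,y)$ for eigenvalue $\lambda+\mu$. Since $(x,y)$ was arbitrary, the equation holds everywhere on $V(G_1\square G_2)$. Finally, I would note that $f_1\not\equiv 0$ and $f_2\not\equiv 0$ (as they are eigenfunctions) imply $f_1\otimes f_2\not\equiv 0$, so $f_1\otimes f_2$ is indeed an eigenfunction and not the zero function.

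There is no real obstacle here; the only thing one must be a bit careful about is the clean bookkeeping of the neighborhood split, since this is what makes the two single-variable eigenfunction relations decouple and add up to $\lambda+\mu$. The argument uses neither the structure of a Hamming graph nor any properties beyond the definition of the Cartesian product and of the Laplacian eigenvalue equation, which is why the lemma can be cited as a general fact.
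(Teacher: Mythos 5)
Your proof is correct and complete: the neighborhood of $(x,y)$ in $G_1\square G_2$ does split disjointly into the two described pieces, the factorization and the application of \eqref{Eq:Eigenfunction} to each factor are valid, and the final remark that $f_1\otimes f_2\not\equiv 0$ is the right (and easy to overlook) finishing touch. The paper itself offers no proof of Lemma \ref{L:Product}, citing it as a well-known fact, so your direct verification is exactly the standard argument one would supply in its place.
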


Using Lemma \ref{L:Product} for $G_1=H(m,q)$ and $G_2=H(n,q)$, we immediately obtain the following result.

\begin{corollary}\label{Corollary:Product}
Let $f_1$ be a $\lambda_i(m,q)$-eigenfunction of $H(m,q)$ and let $f_2$ be a $\lambda_j(n,q)$-eigenfunction of $H(n,q)$. 
Then $f_1\otimes f_2$ is a $\lambda_{i+j}(m+n,q)$-eigenfunction of $H(m+n,q)$.
\end{corollary}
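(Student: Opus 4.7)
The plan is to derive this corollary as an immediate consequence of Lemma \ref{L:Product}, once two routine observations are in place.

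First, I would note the standard decomposition $H(m,q)\square H(n,q)=H(m+n,q)$: a vertex of $H(m+n,q)$ is a tuple in $\mathbb{Z}_q^{m+n}$, which we split into its first $m$ and last $n$ coordinates, and two such tuples are adjacent in $H(m+n,q)$ precisely when they differ in exactly one coordinate, i.e.\ either the first $m$ coordinates agree and the last $n$ differ in one place, or vice versa. This is exactly the adjacency in the Cartesian product.

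Second, I would apply Lemma \ref{L:Product} to $G_1=H(m,q)$, $G_2=H(n,q)$, $\lambda=\lambda_i(m,q)$, and $\mu=\lambda_j(n,q)$. The lemma gives that $f_1\otimes f_2$ is a $(\lambda_i(m,q)+\lambda_j(n,q))$-eigenfunction of $H(m,q)\square H(n,q)=H(m+n,q)$.

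Finally, I would compute the resulting eigenvalue using the formula $\lambda_k(N,q)=q\cdot k$ from the introduction:
\begin{equation*}
\lambda_i(m,q)+\lambda_j(n,q)=q\cdot i+q\cdot j=q\cdot(i+j)=\lambda_{i+j}(m+n,q),
\end{equation*}
which yields the desired conclusion. There is no real obstacle here; the only thing to verify carefully is the identification of $H(m,q)\square H(n,q)$ with $H(m+n,q)$, but this is a well-known fact and follows directly from the definitions. The corollary is essentially a restatement of Lemma \ref{L:Product} specialized to Hamming graphs.
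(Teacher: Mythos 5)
Your proof is correct and follows exactly the route the paper intends: the paper derives this corollary immediately from Lemma \ref{L:Product} with $G_1=H(m,q)$ and $G_2=H(n,q)$, relying on the same identification $H(m,q)\square H(n,q)=H(m+n,q)$ and the eigenvalue formula $\lambda_k(N,q)=q\cdot k$ that you spell out. You have merely made explicit the routine verifications the paper leaves implicit.
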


We will also need the following result.

\begin{lemma}\label{L:N-E}
Let $g$ be a real-valued function on $\mathbb{Z}_q^n$ and let $h$ be a function on $\mathbb{Z}_q$ such that $h(x)=1$ for all $x\in \mathbb{Z}_q$.
If $\mathrm{SND}(g)=2$, then $\mathrm{SND}(g\otimes h)=2$.
\end{lemma}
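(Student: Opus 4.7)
The plan is to verify that the strong nodal domains of $g\otimes h$ are exactly the ``cylinders'' $D\times\mathbb{Z}_q$ over the strong nodal domains $D$ of $g$. Since $h\equiv 1$, we have $(g\otimes h)(x,y)=g(x)$, so $S_+(g\otimes h)=S_+(g)\times\mathbb{Z}_q$ and $S_-(g\otimes h)=S_-(g)\times\mathbb{Z}_q$. The product structure $H(n+1,q)=H(n,q)\,\square\,H(1,q)$ means that the nontrivial work is purely graph-theoretic: I need to relate connected components in $H(n,q)$ on a set $A\subseteq\mathbb{Z}_q^n$ to connected components in $H(n+1,q)$ on $A\times\mathbb{Z}_q$.

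First I would prove the following elementary fact: for any $A\subseteq\mathbb{Z}_q^n$, the connected components of the subgraph of $H(n+1,q)$ induced by $A\times\mathbb{Z}_q$ are exactly the sets $C\times\mathbb{Z}_q$, where $C$ ranges over the connected components of the subgraph of $H(n,q)$ induced by $A$. In one direction, if $C\subseteq A$ is connected in $H(n,q)$, then $C\times\mathbb{Z}_q$ is connected in $H(n+1,q)$: each fiber $C\times\{y\}$ is connected (lift a path in $C$ using edges of the first type), and any two fibers over the same $x\in C$ are joined by $H(1,q)$-edges, since $H(1,q)=K_q$. In the other direction, if $A_1,A_2$ lie in different components of $A$ in $H(n,q)$, then no edge of $H(n+1,q)$ can join $A_1\times\mathbb{Z}_q$ to $A_2\times\mathbb{Z}_q$: an edge of the first type requires equal first coordinates (impossible since $A_1\cap A_2=\emptyset$), and an edge of the second type requires adjacent first coordinates, contradicting that $A_1,A_2$ are in different components of $A$.

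Applying this to $A=S_+(g)$ and $A=S_-(g)$ gives a bijection between the strong nodal domains of $g$ and those of $g\otimes h$, via $D\mapsto D\times\mathbb{Z}_q$. In particular, $\mathrm{SND}(g\otimes h)=\mathrm{SND}(g)=2$. This argument works uniformly regardless of the sign distribution of the two domains of $g$ (two positive, two negative, or one of each).

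There is no genuine obstacle here; the lemma is essentially a bookkeeping statement about the Cartesian product with a complete graph. The only point that needs care is confirming that connectivity in the product behaves as claimed, and this is where using $H(1,q)=K_q$ (so that all fibers are interconnected) is used.
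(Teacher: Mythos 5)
Your proof is correct and takes essentially the same route as the paper's: identify $S_{+}(g\otimes h)=S_{+}(g)\times\mathbb{Z}_q$ and $S_{-}(g\otimes h)=S_{-}(g)\times\mathbb{Z}_q$, then observe that cylinders over connected sets are connected in the Cartesian product with $K_q$. You actually prove a bit more than the paper does (a full bijection of components, which also covers the degenerate case where both strong nodal domains of $g$ carry the same sign, a case the paper's one-line connectivity assertion silently ignores); the only blemish is a harmless slip where the labels ``first type'' and ``second type'' for the two kinds of product edges are swapped between your two paragraphs.
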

\begin{proof}
Let $f=g\otimes h$.
By the definition of the tensor product of functions, we have

\begin{equation}\label{Eq:N-E-1}
S_{+}(f)=S_{+}(g)\times \mathbb{Z}_q  
\end{equation}

and

\begin{equation}\label{Eq:N-E-2}
S_{-}(f)=S_{-}(g)\times \mathbb{Z}_q  
\end{equation}

Since $\mathrm{SND}(g)=2$, the subgraphs of $H(n,q)$ induced by $S_{+}(g)$ and $S_{-}(g)$ are both connected.
Using (\ref{Eq:N-E-1}) and (\ref{Eq:N-E-2}), 
we obtain that the subgraphs of $H(n+1,q)$ induced by $S_{+}(f)$ and $S_{-}(f)$ are also connected.
Therefore, we have $\mathrm{SND}(f)=2$.
\end{proof}

\begin{corollary}\label{Cor:N-E-0}
If there exists a $\lambda_{i}(n,q)$-eigenfunction $g$ of $H(n,q)$ such that $\mathrm{SND}(g)=2$,
then there exists a $\lambda_{i}(n+1,q)$-eigenfunction $f$ of $H(n+1,q)$ such that $\mathrm{SND}(f)=2$.
\end{corollary}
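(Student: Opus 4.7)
The plan is to combine the two immediately preceding results by tensoring $g$ with the constant function on $H(1,q)$. Let $h$ be the function on $\mathbb{Z}_q$ defined by $h(x)=1$ for all $x$. Since $H(1,q)$ is the complete graph $K_q$, whose Laplacian has $0$ as an eigenvalue with the constant eigenvector, the function $h$ is a $\lambda_0(1,q)$-eigenfunction of $H(1,q)$.

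Next, I would invoke Corollary \ref{Corollary:Product} with $m=n$ and $n=1$ (so $m+n=n+1$ and $i+j=i+0=i$). This yields that $f=g\otimes h$ is a $\lambda_i(n+1,q)$-eigenfunction of $H(n+1,q)$. Note that $f\not\equiv 0$ since $g\not\equiv 0$, so $f$ really is an eigenfunction in the sense used in the paper.

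Finally, I would apply Lemma \ref{L:N-E} directly: because $\mathrm{SND}(g)=2$ and $h$ is the all-ones function, we immediately conclude $\mathrm{SND}(f)=\mathrm{SND}(g\otimes h)=2$, completing the proof.

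There is no real obstacle here: the corollary is purely a combination of Corollary \ref{Corollary:Product} (to preserve the eigenvalue when multiplying by a constant eigenfunction on $H(1,q)$) and Lemma \ref{L:N-E} (to preserve the strong-nodal-domain count when tensoring with an all-ones factor). The only thing worth explicitly verifying is that $h\equiv 1$ is indeed a $\lambda_0(1,q)$-eigenfunction, which is a standard observation about the Laplacian of $K_q$.
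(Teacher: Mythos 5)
Your proposal is correct and follows essentially the same route as the paper: tensor $g$ with the all-ones function $h$ on $\mathbb{Z}_q$, use Corollary \ref{Corollary:Product} (with $h$ a $\lambda_0(1,q)$-eigenfunction) to get the eigenvalue, and Lemma \ref{L:N-E} to preserve $\mathrm{SND}=2$. No gaps.
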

\begin{proof}
Let $h$ be a function on $\mathbb{Z}_q$ such that $h(x)=1$ for all $x\in \mathbb{Z}_q$
and let $f=g\otimes h$.
Lemma \ref{L:N-E} implies that $\mathrm{SND}(f)=2$.
Note that $h$ is a $\lambda_{0}(1,q)$-eigenfunction of $H(1,q)$.
Therefore, $f$ is a $\lambda_{i}(n+1,q)$-eigenfunction of $H(n+1,q)$ due to Corollary \ref{Corollary:Product}.
\end{proof}

Using Corollary \ref{Cor:N-E-0}, we immediately obtain the following result.

\begin{corollary}\label{Cor:N-E}
Suppose that there exists a $\lambda_{i}(n_0,q)$-eigenfunction $g$ of $H(n_0,q)$ such that $\mathrm{SND}(g)=2$.
Then for every $n>n_0$ there exists a $\lambda_{i}(n,q)$-eigenfunction $f$ of $H(n,q)$ such that $\mathrm{SND}(f)=2$.
\end{corollary}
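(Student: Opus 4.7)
The plan is to obtain Corollary~\ref{Cor:N-E} as a straightforward induction on $n$, using Corollary~\ref{Cor:N-E-0} as the induction step. There is really nothing else to do: Corollary~\ref{Cor:N-E-0} already does the genuine work (tensoring with the all-ones function on $\mathbb{Z}_q$ to bump the dimension up by one while preserving both the eigenvalue index $i$ and the property $\mathrm{SND}=2$), and the claim at hand is just its iterated form.

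Concretely, I would fix $q$ and $i$, take $n_0$ and $g$ as in the hypothesis, and induct on $n\geq n_0$. The base case $n=n_0$ is the hypothesis itself with $f=g$. For the induction step, assume we have produced a $\lambda_{i}(n,q)$-eigenfunction $f_n$ of $H(n,q)$ with $\mathrm{SND}(f_n)=2$; then Corollary~\ref{Cor:N-E-0} applied to $f_n$ yields a $\lambda_{i}(n+1,q)$-eigenfunction $f_{n+1}$ of $H(n+1,q)$ with $\mathrm{SND}(f_{n+1})=2$. This advances the induction and gives the conclusion for all $n>n_0$.

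The main obstacle is essentially nonexistent — the only thing one has to be mildly careful about is that the index $i$ of the eigenvalue (not the eigenvalue itself, which depends on the dimension through the Laplacian of $H(n,q)$) is preserved when tensoring with a $\lambda_0(1,q)$-eigenfunction. But this is already baked into Corollary~\ref{Cor:N-E-0} via Corollary~\ref{Corollary:Product}, since $\lambda_{i}(n,q)+\lambda_{0}(1,q)=\lambda_{i}(n+1,q)$. So the argument is just a single sentence of induction, and I would present it that way.
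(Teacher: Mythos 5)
Your proposal is correct and is exactly the paper's argument: the paper states that the corollary follows immediately from Corollary~\ref{Cor:N-E-0}, i.e.\ by iterating it, which is precisely your induction on $n$ with base case $n=n_0$. No gap, and nothing further to add.
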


In what follows, we will need the following fact.

\begin{lemma}\label{L:U_1}
For every $n\geq 1$ and every $q\geq 2$, 
there exists a $\lambda_{1}(n,q)$-eigenfunction $f$ of $H(n,q)$ such that $\mathrm{SND}(f)=2$.
\end{lemma}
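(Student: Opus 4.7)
The plan is to establish the base case $n=1$ directly by writing down an explicit eigenfunction, and then invoke Corollary \ref{Cor:N-E} with $n_0 = 1$ to obtain the result for all $n \geq 2$.

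For the base case, note that $H(1,q) = K_q$ is a complete graph on $q$ vertices, each of degree $q-1$; its Laplacian has eigenvalues $0$ (with multiplicity $1$) and $q = \lambda_1(1,q)$ (with multiplicity $q-1$), and every real function on $\mathbb{Z}_q$ with zero sum and not identically zero is a $\lambda_1(1,q)$-eigenfunction. I would take
$$g(0) = q-1, \qquad g(a) = -1 \text{ for every } a \in \{1, \ldots, q-1\}.$$
A one-line check of equation (\ref{Eq:Eigenfunction}) at $x=0$ and at $x=a \neq 0$ confirms that $g$ is indeed a $\lambda_1(1,q)$-eigenfunction of $H(1,q)$.

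Next I would verify the nodal domain count. Since $g$ never vanishes, every strong nodal domain is also a weak nodal domain, and the positive and negative vertex sets are $S_{+}(g) = \{0\}$ and $S_{-}(g) = \{1, \ldots, q-1\}$. The first is a single vertex, while the second induces $K_{q-1}$ inside $K_q$; both are connected, so $\mathrm{SND}(g) = 2$.

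Finally, applying Corollary \ref{Cor:N-E} with $n_0 = 1$ produces a $\lambda_1(n,q)$-eigenfunction $f$ of $H(n,q)$ with $\mathrm{SND}(f) = 2$ for every $n > 1$, which together with the base case covers all $n \geq 1$. I do not expect any genuine obstacle: the content of the lemma is essentially packaged into Corollaries \ref{Corollary:Product} and \ref{Cor:N-E}, and the only work is to exhibit the correct function on $K_q$.
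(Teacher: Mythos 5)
Your proof is correct and follows essentially the same route as the paper: exhibit an explicit $\lambda_1(1,q)$-eigenfunction of $H(1,q)=K_q$ with two strong nodal domains, then lift it to all $n>1$ via Corollary \ref{Cor:N-E}. The only difference is the choice of base function --- the paper takes $f(0)=1$, $f(1)=-1$ and $f\equiv 0$ elsewhere, while you take the nowhere-vanishing $g(0)=q-1$, $g(a)=-1$ for $a\neq 0$; both verifications are equally immediate.
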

\begin{proof}
Firstly, let us prove the statement for $n=1$.
We define a function $f$ on $\mathbb{Z}_q$ by the following rule:
$$
f(x)=\begin{cases}
1,&\text{if $x=0$;}\\
-1,&\text{if $x=1$;}\\
0,&\text{otherwise.}
\end{cases}
$$

It is easy to see that $f$ is a $\lambda_{1}(1,q)$-eigenfunction of $H(1,q)$ and $\mathrm{SND}(f)=2$.
Therefore, we have proved the lemma for $n=1$.
The proof for $n>1$ follows from Corollary \ref{Cor:N-E}.
\end{proof}

\section{Equitable partitions}\label{Sec:EP}
In this section, we provide background on equitable 2-partitions of Hamming graphs.
In particular, we discuss in detail two special constructions of equitable 2-partitions of Hamming graphs.
We will use these constructions in Sections \ref{Sec:q=2} and \ref{Sec:q=3}.

\subsection{Equitable partitions: definitions and examples}
Let $G$ be a graph with vertex set $V$.
An $r$-partition $(C_1,C_2,\ldots,C_r)$ of $V$ is called {\em equitable} if
for any $i,j\in [r]$ there is a constant $s_{i,j}$ such that any vertex of $C_i$ has exactly $s_{i,j}$ neighbors in $C_j$.
The matrix $S=(s_{i,j})_{1\leq i,j\leq r}$ is the {\em quotient matrix} of the partition.
Double counting the edges between $C_i$ and $C_j$ in $G$ yields the following equality:

\begin{equation}\label{Eq:1}
|C_i|\cdot s_{i,j}=|C_j|\cdot s_{j,i}
\end{equation}

\begin{example}\label{Ex:Eq}
Let $D_2=\{(1,0,0), (0,1,0), (0,0,1), (1,1,0), (1,0,1), (0,1,1)\}$ and $D_1=\{(0,0,0), (1,1,1)\}$. Then $(D_1,D_2)$ is an equitable $2$-partition of $H(3,2)$ 
with the quotient matrix 
$\begin{pmatrix}
0 & 3\\
1 & 2\\
\end{pmatrix}$
(see Figure \ref{fig:M1}).
\end{example}

	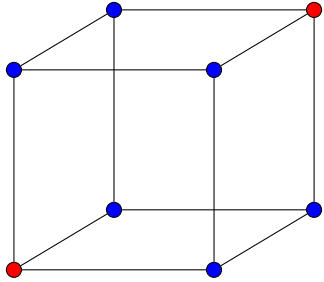
\begin{figure}
	\begin{center}
	\begin{minipage}{.45\linewidth}
		
		\begin{tikzpicture}[scale=2.66,
			mycirc/.style={circle, minimum size=0.2cm}]
			\tikzstyle{every node}=[draw,circle,fill=white,minimum size=0pt,
			inner sep=0pt]

			
			\node[mycirc,shape=circle,draw=black,text width=0.2cm, fill=red, ] (000) at (0,0) {};
			
			\node[mycirc,shape=circle,draw=black,text width=0.2cm, fill=blue, ] (100) at (1,0) {};
			\node[mycirc,shape=circle,draw=black,text width=0.2cm, fill=blue, ] (010) at (0,1) {};
			\node[mycirc,shape=circle,draw=black,text width=0.2cm, fill=blue, ] (110) at (1,1) {};

			\node[mycirc,shape=circle,draw=black,text width=0.2cm, fill=blue, ] (001) at (0+0.5,0+0.3) {};
			
			\node[mycirc,shape=circle,draw=black,text width=0.2cm, fill=blue, ] (101) at (1+0.5,0+0.3) {};
			\node[mycirc,shape=circle,draw=black,text width=0.2cm, fill=blue, ] (011) at (0+0.5,1+0.3) {};
			\node[mycirc,shape=circle,draw=black,text width=0.2cm, fill=red, ] (111) at (1+0.5,1+0.3) {};

			
			\path[draw=black]  (000) edge (100);
			\path[draw=black]  (000) edge (010);
			\path[draw=black]  (110) edge (010);
			\path[draw=black]  (110) edge (100);
			
			\path[draw=black]  (001) edge (101);
			\path[draw=black]  (001) edge (011);
			\path[draw=black]  (111) edge (011);
			\path[draw=black]  (111) edge (101);
			
			\path[draw=black]  (000) edge (001);
			\path[draw=black]  (100) edge (101);
			\path[draw=black]  (010) edge (011);
			\path[draw=black]  (110) edge (111);

		\end{tikzpicture}

	\end{minipage}\hfill

\end{center}
\caption{Partition $(D_1,D_2)$ in $H(3,2)$} \label{fig:M1}
\end{figure}

A general information on equitable partitions can be found in \cite[Chapter~5]{Godsil}.
For more information on equitable 2-partitions of Hamming graphs we refer the reader to a survey \cite{BKMTV21}.

\subsection{Multiplication construction}

For our arguments we will need the following multiplication construction of equitable 2-partitions in a Hamming graph $H(n,2)$, that was firstly proposed by Fon-Der-Flaass in \cite[Proposition 1]{FDF07SMJ}. For a generalization of this construction for arbitrary number of cells and arbitrary $q$ see also \cite[Proposition 4.4]{BKMTV21}.

\begin{lemma}[\cite{FDF07SMJ}, Proposition 1]\label{L:Multiple-Constr}
Let $(D_1,D_2)$ be an equitable $2$-partition of $H(n,2)$ with the quotient matrix $S$. For integer $k\geq 1$, consider 
a partition $(C_1,C_2)$ of $H(kn,2)$ defined in the following way:
\vspace{1.5mm}

for any $x=(x_1,x_2,\dots, x_{kn}) \in \mathbb{Z}_2^{kn}$ and $i \in \{1,2\}$, we have $x \in C_i$ if and only if 

$$\bigl(\sum_{i=1}^{k}{x_i},\sum_{i=1}^{k}{x_{k+i}}, \dots, \sum_{i=1}^{k}{x_{(n-1)k+i}} \bigr) \in D_i .$$

Then $(C_1, C_2)$ is an equitable $2$-partition of $H(kn,2)$ with the quotient matrix $kS$.

\end{lemma}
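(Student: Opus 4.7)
The plan is to prove the claim by a direct counting argument using the natural projection $\pi:\mathbb{Z}_2^{kn}\to\mathbb{Z}_2^n$ that sends $x=(x_1,\dots,x_{kn})$ to the vector whose $j$-th coordinate is $\sum_{i=1}^{k}x_{(j-1)k+i}$ (with addition in $\mathbb{Z}_2$). By the definition given in the statement, $C_i=\pi^{-1}(D_i)$ for $i\in\{1,2\}$, so every $x\in\mathbb{Z}_2^{kn}$ lies in exactly one $C_i$, and in particular $(C_1,C_2)$ is a partition.

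Next I would analyze how $\pi$ interacts with the edges of $H(kn,2)$. The $kn$ neighbors of a fixed vertex $x$ are obtained by flipping a single coordinate; organizing these by the $n$ blocks of size $k$, I would note that flipping any coordinate inside the $\ell$-th block changes $\pi(x)$ by $e_\ell\in\mathbb{Z}_2^n$, while leaving the other coordinates of $\pi(x)$ unchanged. Hence, for each $\ell\in[n]$, there are exactly $k$ neighbors of $x$ in $H(kn,2)$ whose image under $\pi$ equals $\pi(x)+e_\ell$, and these images are precisely the $n$ neighbors of $\pi(x)$ in $H(n,2)$, each arising with multiplicity $k$.

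Now fix $x\in C_i$, so $\pi(x)\in D_i$. Since $(D_1,D_2)$ is equitable with quotient matrix $S=(s_{i,j})$, exactly $s_{i,j}$ of the $n$ neighbors of $\pi(x)$ lie in $D_j$. Combining this with the multiplicity-$k$ observation, the number of neighbors of $x$ lying in $C_j=\pi^{-1}(D_j)$ equals $k\cdot s_{i,j}$, independent of the choice of $x\in C_i$. This is exactly the statement that $(C_1,C_2)$ is equitable with quotient matrix $kS$.

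There is no real obstacle here: the only thing to be careful about is making the bookkeeping of coordinate blocks precise, and verifying that the argument works uniformly for $i=j$ and $i\neq j$ (which it does, since flipping a coordinate always changes $\pi(x)$ in $\mathbb{Z}_2^n$, so no neighbor of $x$ maps to $\pi(x)$ itself). The construction and the constant $k$ in front of $S$ are then immediate consequences of the fact that a single step in $H(n,2)$ at position $\ell$ is lifted to $k$ distinct steps in $H(kn,2)$ within the $\ell$-th block.
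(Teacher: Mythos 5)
Your argument is correct and complete: the block-sum projection $\pi$, the observation that each neighbor $\pi(x)+e_\ell$ of $\pi(x)$ in $H(n,2)$ is hit by exactly $k$ neighbors of $x$ (one per coordinate of the $\ell$-th block), and the resulting count $k\cdot s_{i,j}$ give exactly the stated quotient matrix $kS$. Note that the paper itself offers no proof of this lemma --- it is quoted from Fon-Der-Flaass \cite{FDF07SMJ} --- so there is nothing to compare against; your write-up is a valid self-contained verification of that cited result.
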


\subsection{Construction of type A}

We start from the following equitable $2$-partition $(D_1, D_2)$ of $H(3,2)$ from Example \ref{Ex:Eq}:
$$D_1=\{(0,0,0), (1,1,1)\},\ D_2=\{(1,0,0), (0,1,0), (0,0,1), (1,1,0), (1,0,1), (0,1,1)\}.$$
As we noted in Example \ref{Ex:Eq}, the quotient matrix of the partition is $\left(\begin{array}{cl}
	0 & 3\\

	1 & 2
\end{array}\right)$.
Let us also notice, that the subgraph of $H(3,2)$ induced by the set $D_2$ is connected, 
and by the set $D_1$ is not connected.
For any integer $k\geq 1$, we apply the multiplication construction to our partition and obtain a new equitable 2-partition $(C_1,C_2)$ of $H(3k,2)$ with the quotient matrix $\left(\begin{array}{cl}
	0 & 3k\\
	
	k & 2k
\end{array}\right)$, that will be referred as the partition of type $A$ in our further arguments (for $k=1$ we have $C_1=D_1$ and $C_2=D_2$).

In what follows, we will need the following two results.

\begin{lemma}\label{L:Translation}
Let $k\geq 1$ and let $(C_1,C_2)$ be an equitable $2$-partition of $H(3k,2)$ of type A.
Let $C_3=C_1+e_t$ and $C_4=C_2+e_t$, where $t\in [3k]$. Then the following statements hold:

\begin{enumerate}
  \item $C_1\subset C_4$.
  
  \item $C_3\subset C_2$.
  
  \item $C_2\cap C_4\neq \emptyset$.
\end{enumerate}

\end{lemma}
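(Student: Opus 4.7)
The plan is to push the problem down to the base partition $(D_1,D_2)$ of $H(3,2)$ via the multiplication map. Let $\pi\colon \mathbb{Z}_2^{3k}\to \mathbb{Z}_2^3$ denote the map
$$\pi(x_1,\dots,x_{3k})=\Bigl(\sum_{i=1}^{k} x_i,\ \sum_{i=1}^{k} x_{k+i},\ \sum_{i=1}^{k} x_{2k+i}\Bigr),$$
so that by Lemma \ref{L:Multiple-Constr} we have $C_i = \pi^{-1}(D_i)$ for $i\in\{1,2\}$. The key observation is that for $t\in[3k]$, adding $e_t$ flips exactly one of the three coordinate-block sums: $\pi(x+e_t)=\pi(x)+e_{j(t)}$, where $j(t)=\lceil t/k\rceil\in\{1,2,3\}$. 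Hence the action of $+e_t$ on $\mathbb{Z}_2^{3k}$ corresponds under $\pi$ to the action of $+e_{j(t)}$ on $\mathbb{Z}_2^3$.

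For statements (1) and (2), I would first note that they are equivalent: since $+e_t$ is an involution in $\mathbb{Z}_2^{3k}$, the inclusion $C_1\subset C_2+e_t$ is equivalent to $C_1+e_t\subset C_2$. To prove $C_3=C_1+e_t\subset C_2$, take $x\in C_1$; then $\pi(x)\in D_1=\{(0,0,0),(1,1,1)\}$, and $\pi(x+e_t)=\pi(x)+e_{j(t)}$ is a vector of Hamming weight $1$ or $2$ in $\mathbb{Z}_2^3$, which places it in $D_2$. Therefore $x+e_t\in C_2$, as required.

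For statement (3), the goal is to exhibit an element of $C_2\cap(C_2+e_t)$, i.e., to find $x$ with both $\pi(x)\in D_2$ and $\pi(x)+e_{j(t)}\in D_2$. A direct check in the six-element set $D_2$ shows that for any $j\in\{1,2,3\}$ one may take $v=e_{j'}$ with $j'\neq j$: both $v$ and $v+e_j$ are weight-$1$ or weight-$2$ vectors, hence lie in $D_2$. Given such $v$, any $x\in\mathbb{Z}_2^{3k}$ with $\pi(x)=v$ works; concretely, $x=e_{(j'-1)k+1}$ (a single $1$ in the first position of block $j'$) satisfies $\pi(x)=e_{j'}$.

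The main obstacle here is essentially bookkeeping: correctly identifying which block of $k$ coordinates the index $t$ falls into, and then recognizing that the nontrivial content of the lemma is really a statement about the $8$-vertex partition in $H(3,2)$. Once the projection $\pi$ and the substitution $t\mapsto j(t)$ are in place, all three claims reduce to finite verifications in $D_1$ and $D_2$.
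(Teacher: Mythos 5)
Your proof is correct, but it takes a genuinely different route from the paper's. The paper argues abstractly from the quotient matrix: since $s_{1,1}=0$, no two vertices of $C_1$ are adjacent, so $C_1\cap C_3=\emptyset$ and hence $C_1\subseteq C_4$; properness then follows from $|C_4|=|C_2|=3|C_1|$ (via equality~(\ref{Eq:1})); and statement (3) is deduced as a corollary of (1), since the nonempty set $C_4\setminus C_1$ must lie in $C_2$. You instead unpack the multiplication construction through the block-sum projection $\pi$ and reduce all three claims to finite checks in the eight-vertex base partition $(D_1,D_2)$ of $H(3,2)$, using the identity $\pi(x+e_t)=\pi(x)+e_{j(t)}$ and the observation that (1) and (2) are equivalent under the involution $+e_t$; this also yields an explicit witness for (3) rather than a pure existence argument. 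Both approaches are sound. The paper's is shorter and more portable --- essentially the same proof is reused verbatim for the type-B partitions in Lemma~\ref{L:Translation-3}, where no multiplication construction is in play --- while yours is self-contained and makes the mechanism behind the type-A construction concrete. One small omission: the lemma asserts \emph{proper} inclusions, and your argument for (1)--(2) only delivers $\subseteq$; add the one-line cardinality remark $|C_1+e_t|=|C_1|=\tfrac{1}{3}|C_2|<|C_2|=|C_2+e_t|$, which is immediate from your setup since $\pi$ is a surjective homomorphism with fibers of equal size and $|D_2|=3|D_1|$.
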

\begin{proof}
1. Suppose that there exists a vertex $x\in C_1$ such that $x\not\in C_4$. Then $x\in C_3$. Therefore, $x=y+e_t$ for some vertex $y\in C_1$.
This implies that $x$ and $y$ are adjacent in $H(3k,2)$.
This contradicts with $s_{1,1}=0$.
Thus, we have $C_1\subseteq C_4$.
On the other hand, the equality (\ref{Eq:1}) for $i=1$ and $j=2$ implies that $|C_2|=3|C_1|$. Hence, $|C_4|=3|C_1|$. Therefore, $C_1\subset C_4$.

2. The proof for the second case is similar.

3. Since $C_1\subset C_4$, any vertex from $C_4\setminus C_1$ belongs to $C_2$. Therefore, the set $C_2\cap C_4$ is not empty.
\end{proof}

\begin{lemma}\label{L:Connectivity}
Let $k\geq 1$ and let $(C_1,C_2)$ be an equitable $2$-partition of $H(3k,2)$ of type A.
Then the subgraph of $H(3k,2)$ induced by $C_2$ is connected.
\end{lemma}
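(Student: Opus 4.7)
The plan is to prove connectedness of $C_2$ by tracking the block-sum map $\sigma \colon \mathbb{Z}_2^{3k} \to \mathbb{Z}_2^3$ defined by $\sigma(x)_j = \sum_{i=1}^{k} x_{(j-1)k+i} \pmod 2$; by the multiplication construction, $C_2 = \sigma^{-1}(D_2)$. Two structural facts about $(D_1, D_2)$ in $H(3,2)$ will drive the argument: first, the subgraph of $H(3,2)$ induced by $D_2$ is a $6$-cycle, so for each $s \in D_2$ there is a unique ``bad'' direction $j_s \in \{1,2,3\}$ for which toggling coordinate $j_s$ of $s$ produces an element of $D_1$; second, one checks directly from $D_1 = \{(0,0,0),(1,1,1)\}$ that translating $D_1$ by any standard basis vector of $\mathbb{Z}_2^3$ lands inside $D_2$.

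The argument splits into an \emph{inter-signature} step and an \emph{intra-signature} step. The inter-signature step is immediate: if $s, s' \in D_2$ are adjacent in the $6$-cycle, differing in coordinate $j$, then flipping any coordinate of block $j$ in $H(3k,2)$ realises an edge of $C_2$ between $\sigma^{-1}(s)$ and $\sigma^{-1}(s')$. Together with the connectedness of $D_2$, this lets me move between fibres of $\sigma$ freely inside $C_2$.

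The core of the proof is the intra-signature step: for fixed $s \in D_2$, any two $x, y \in \sigma^{-1}(s)$ are connected in the subgraph induced by $C_2$. Since $x$ and $y$ agree on each block sum, they differ in an even number of coordinates within each block, so $x \mapsto y$ can be decomposed into a sequence of ``double flips'' $x \mapsto x + e_{i_1} + e_{i_2}$ with $i_1, i_2$ in a common block $j$; it therefore suffices to realise one such double flip by a path in $C_2$. If $j \ne j_s$, the two-step path through $x + e_{i_1}$ stays in $C_2$, since its middle signature (namely $s$ toggled in direction $j$) lies in $D_2$. In the delicate case $j = j_s$, I detour through another block: for any $j' \ne j$ and any coordinate $\ell$ of block $j'$, I use
$$x \to x + e_\ell \to x + e_\ell + e_{i_1} \to x + e_\ell + e_{i_1} + e_{i_2} \to x + e_{i_1} + e_{i_2}.$$
The intermediate signatures are $s$ toggled in direction $j'$, then further in direction $j$, then toggled back; the first lies in $D_2$ because $j' \ne j_s$, and the second is obtained from an element of $D_1$ by translating along direction $j'$, so lies in $D_2$ by the second structural fact.

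Combining the two steps gives connectedness of $C_2$. I expect the detour case $j = j_s$ to be the main obstacle: it is precisely here that the translation property for $D_1$ must be invoked rather than merely connectedness of $D_2$, making this the step most tied to the particular starting partition of $H(3,2)$.
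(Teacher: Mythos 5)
Your proposal is correct and follows essentially the same strategy as the paper's proof: decompose $C_2$ into the six fibres of the block-sum map, connect each fibre internally by flipping coordinates block by block while detouring through an adjacent fibre to avoid the unique direction that would land in $D_1$ (which is exactly where the translation property $D_1+e_{j'}\subset D_2$ enters, implicitly via Lemma \ref{L:Translation} in the paper), and then link the fibres using the connectivity of the subgraph induced by $D_2$ in $H(3,2)$. The only difference is organizational: you reduce the intra-fibre step to individual double flips, each realized by a local two- or four-step path, whereas the paper writes one global path per fibre, processing the safe blocks first and making a single detour toggle before traversing the bad block.
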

\begin{proof}
	Since $(C_1,C_2)$ is a partition of type $A$, the set $C_2$ can be described as follows:
	$$x=(x_1, x_2,\dots, x_k,x_{k+1}, x_{k+2},\dots, x_{2k}, x_{2k+1}, x_{2k+2},\dots, x_{3k} ) \in C_2 $$
	if and only if
	\begin{equation}\label{Eq:multi-constr}
 \bigl(\sum_{i=1}^{k}{x_i},\sum_{i=k+1}^{2k}{x_{i}},  \sum_{i=2k+1}^{3k}{x_i} \bigr) \in D_2,  
	\end{equation}
where $D_2=\bigl\{(1,0,0), (0,1,0), (0,0,1), (1,1,0), (1,0,1), (0,1,1)\bigr\}. $
	As it was defined in the beginning, the summation in (\ref{Eq:multi-constr}) is modulo $2$.
	
	In case $k=1$, we have that $C_2=D_2$. Consequently, the subgraph of $H(3,2)$ induced by $C_2$ is connected.
	In the rest of the proof consider the case $k\geq 2$. Clearly, $C_2$ is a union of six sets $M_{100}$, $M_{010}$, $M_{001}$, $M_{110}$,  $M_{101}$ and $M_{011}$ corresponding to different elements of $D_2$.
	
	Let us denote by $G$ the subgraph of $H(3k,2)$ induced by $C_2$. Our next step is to prove that for any two distinct vertices from $M_{100}$ there exist a path in $G$, that connects them. In order to do it, we will show how to build a path from $y$ to $z$ in $G$,
	where $y$ and $z$ are two arbitrary elements of $M_{100}$, i.e.   
	$$y=(y_1, y_2,\dots, y_k,y_{k+1}, y_{k+2},\dots, y_{2k}, y_{2k+1}, y_{2k+2},\dots, y_{3k} ), $$
	$$z=(z_1, z_2,\dots, z_k,y_{k+1}, z_{k+2},\dots, z_{2k}, z_{2k+1}, z_{2k+2},\dots, z_{3k} ), $$
	$$ \sum_{i=1}^{k}{y_i}=\sum_{i=1}^{k}{z_i}=1,\,\sum_{i=k+1}^{2k}{y_i}=\sum_{i=k+1}^{2k}{z_i}=0,\,\sum_{i=2k+1}^{3k}{y_i}=\sum_{i=2k+1}^{3k}{z_i}=0.$$
	By the definition of a hypercube, one edge in the graph corresponds to changing a value at some coordinate position. Therefore, any path may be encoded by a sequence of coordinate positions in which we change a value. 
	
	For $t=0,1,2$, let us define $I_t=\{i \in \{tk+1,tk+2,\dots (t+1)k\}| y_i\neq z_i \}$. Now let us find the path. At first, we change step by step entries of $y$ in coordinate positions from $2k+1$ to $3k$, in which $y$ differs from $z$. In other words, we do it in coordinate positions $I_2$. Clearly, at each step we are in $M_{100}$ or $M_{101}$. As a result, we are at vertex 
		$$y'=(y_1, y_2,\dots, y_k,y_{k+1}, y_{k+2},\dots, y_{2k}, z_{2k+1}, z_{2k+2},\dots, z_{3k} )\in M_{100}. $$
	Then we continue the procedure for a set of positions $I_1$. Again, at each step we are in $C_2$ (more precisely, in $M_{100}$ or $M_{110}$). As a result, we find a path from $y$ to 
	$$y''=(y_1, y_2,\dots, y_k,z_{k+1}, z_{k+2},\dots, z_{2k}, z_{2k+1}, z_{2k+2},\dots, z_{3k} )\in M_{100}.$$ 
	Evidently, one can not change any value in the first third of coordinate positions of $y''$ (then we will move to $M_{000}\subset C_1$). Hence, we change $(k+1)$-th entry of $y''$ to $z_{k+1}+1$ (and move to $M_{110}$) and then continue the procedure for the set $I_0$. Finally, we build a path from $y$ to 
    $$y'''=(z_1, z_2,\dots, z_k,z_{k+1}+1, z_{k+2},\dots, z_{2k}, z_{2k+1}, z_{2k+2},\dots, z_{3k} )\in M_{110}.$$ 
    The last step $z_{k+1}+1\rightarrow z_{k+1}$ finishes the path and we prove that $M_{100}$ is a subset of some connected component in $G$.
    The proof for sets $M_{010}$ $M_{001}$ is similar (one need to make an appropriate permutation of coordinate positions, and all arguments work). For sets $M_{110}$, $M_{101}$ and $M_{011}$, the scheme is also the similar. The only difference is that we first change entries from the thirds corresponding to a partial sum equal to $1$ and finish with the third corresponding to a partial sum equal to $0$.

    As it was discussed above, the subgraph of $H(3,2)$ induced by $D_2$ is connected. Consequently, there exists a path in $C_2$, that contains vertices from all the sets $M_{100}$, $M_{010}$, $M_{001}$, $M_{110}$,  $M_{101}$ and $M_{011}$. Therefore, all these sets are in one connected component in $G$, which finishes the proof.
\end{proof}

\subsection{Construction of type B}
For $n\geq 1$, $q\geq 2$ and $a\in \mathbb{Z}_q$, denote $\Gamma_a^{n,q}=\{x\in \mathbb{Z}_q^n:x_1+\ldots+x_n=a\}$.
The following fact follows directly from the definition of $\Gamma_a^{n,q}$.

\begin{lemma}\label{L:Equitable-Hyperplane}
Let $n\geq 1$ and $q\geq 2$.
Let $C_1=\Gamma_0^{n,q}$ and $C_2=\Gamma_1^{n,q}\cup\ldots\cup \Gamma_{q-1}^{n,q}$. Then $(C_1,C_2)$ is an equitable $2$-partition of $H(n,q)$ 
with the quotient matrix 
$\begin{pmatrix}
0 & n(q-1)\\
n & n(q-2)\\
\end{pmatrix}$.
\end{lemma}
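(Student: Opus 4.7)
The plan is to verify the equitable 2-partition property by a direct counting argument, since an edge in $H(n,q)$ corresponds to changing a single coordinate by a nonzero element of $\mathbb{Z}_q$, which shifts the coordinate sum by precisely that nonzero amount. This immediately links adjacency to the partition induced by the cosets $\Gamma_a^{n,q}$.

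First I would note that $\Gamma_0^{n,q},\Gamma_1^{n,q},\ldots,\Gamma_{q-1}^{n,q}$ partition $\mathbb{Z}_q^n$ according to the value of the coordinate sum in $\mathbb{Z}_q$, so $(C_1,C_2)$ is a bona fide 2-partition. Then, given $x\in C_1$, any neighbor $y$ of $x$ differs from $x$ in exactly one coordinate, say coordinate $i$, with new value $b\ne x_i$. The coordinate sum of $y$ equals $b-x_i\ne 0$, so $y\in C_2$. Since $x$ has $n(q-1)$ neighbors in all, this yields $s_{1,1}=0$ and $s_{1,2}=n(q-1)$.

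Next I would fix $x\in C_2$ with coordinate sum $a\ne 0$. For each coordinate $i\in[n]$ and replacement $b\ne x_i$, the new sum is $a+b-x_i$, which equals $0$ precisely when $b=x_i-a$; the condition $a\ne 0$ guarantees $x_i-a\ne x_i$, so exactly one of the $q-1$ admissible replacements at coordinate $i$ produces a neighbor in $C_1$. Summing over the $n$ coordinates gives $s_{2,1}=n$, and the remaining $n(q-1)-n=n(q-2)$ neighbors lie in $C_2$.

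Since the counts above depend only on the cell containing $x$ and not on $x$ itself, the partition is equitable with the claimed quotient matrix. I do not expect any real obstacle here; this is essentially an unpacking of the definitions of $H(n,q)$ and $\Gamma_a^{n,q}$, and the only detail requiring a brief justification is that the specific replacement $b=x_i-a$ is indeed a valid neighboring move, which follows from $a\ne 0$.
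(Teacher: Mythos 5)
Your proof is correct, and it is exactly the direct verification the paper has in mind: the authors give no proof at all, stating only that the lemma ``follows directly from the definition of $\Gamma_a^{n,q}$,'' and your counting argument (every neighbor of a sum-$0$ vertex has nonzero sum; for a vertex of sum $a\neq 0$, exactly one replacement $b=x_i-a$ per coordinate lands in $\Gamma_0^{n,q}$) is precisely that unpacking.
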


We will say that a 2-partition $(C_1,C_2)$ of the vertex set of $H(n,q)$ has type B if $C_1=\Gamma_0^{n,q}$ and $C_2=\Gamma_1^{n,q}\cup\ldots\cup \Gamma_{q-1}^{n,q}$.

In what follows, we will need the following three results.
\begin{lemma}\label{L:Translation-3}
Let $(C_1,C_2)$ be an equitable $2$-partition of $H(n,3)$ of type B.
Let $C_3=C_1+e_t$ and $C_4=C_2+e_t$, where $t\in [n]$. Then the following statements hold:

\begin{enumerate}
  \item $C_1\subset C_4$.
  
  \item $C_3\subset C_2$.
  
  \item $C_2\cap C_4\neq \emptyset$.
\end{enumerate}

\end{lemma}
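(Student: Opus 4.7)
The plan is to follow essentially the same pattern as the proof of Lemma~\ref{L:Translation} for type A partitions, but exploit the fact that translation by $e_t$ in $\mathbb{Z}_3^n$ changes the coordinate sum by exactly $1$ modulo $3$. The cleanest approach is to use this sum change directly, rather than going through a connectivity/adjacency argument, and then to use a counting argument to upgrade the resulting inclusions from $\subseteq$ to $\subset$.

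For part~1, I would take any $x \in C_1 = \Gamma_0^{n,3}$ and observe that $x - e_t$ has coordinate sum $-1 \equiv 2 \pmod 3$, hence $x - e_t \in \Gamma_2^{n,3} \subseteq C_2$. Writing $x = (x - e_t) + e_t$ gives $x \in C_2 + e_t = C_4$, so $C_1 \subseteq C_4$. For strict containment I would invoke the quotient matrix from Lemma~\ref{L:Equitable-Hyperplane} together with equality~(\ref{Eq:1}) (or simply observe directly that $|C_4| = |C_2| = 2\cdot 3^{n-1}$ while $|C_1| = 3^{n-1}$).

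For part~2, an analogous computation works: any $x \in C_3 = C_1 + e_t$ has the form $y + e_t$ with $y \in \Gamma_0^{n,3}$, so $x$ has coordinate sum $1$ and therefore $x \in \Gamma_1^{n,3} \subseteq C_2$. Strictness again follows from $|C_3| = |C_1| = 3^{n-1} < 2\cdot 3^{n-1} = |C_2|$.

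For part~3, I would copy the short argument from Lemma~\ref{L:Translation}: since $C_1 \subsetneq C_4$ by part~1, any element of $C_4 \setminus C_1$ lies in $C_2$ (because $(C_1,C_2)$ is a partition), witnessing $C_2 \cap C_4 \neq \emptyset$. There is no real obstacle here; the only thing one must be careful about is that in $\mathbb{Z}_3$ the shift $+e_t$ is \emph{not} an involution, so one cannot just symmetrize the argument for parts 1 and 2 as in the $q=2$ case—one has to do each direction separately, using that $-1 \equiv 2$ to land in $\Gamma_2^{n,3}$ rather than $\Gamma_1^{n,3}$.
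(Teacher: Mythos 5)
Your proof is correct, but for parts 1 and 2 it takes a different route from the paper. The paper establishes $C_1\subseteq C_4$ by contradiction: if $x\in C_1\setminus C_4$ then $x\in C_3$, so $x=y+e_t$ for some $y\in C_1$, making $x$ and $y$ adjacent vertices of $C_1$ and contradicting $s_{1,1}=0$ from Lemma~\ref{L:Equitable-Hyperplane}; part 2 is handled by the symmetric contradiction. This is a verbatim transplant of the argument for type A partitions (Lemma~\ref{L:Translation}) and uses only the single quotient-matrix entry $s_{1,1}=0$, i.e.\ that $C_1$ is an independent set. You instead compute directly with the explicit description $C_1=\Gamma_0^{n,3}$: the coordinate sum of $x-e_t$ is $2$ and that of $y+e_t$ is $1$, which places these vertices in $\Gamma_2^{n,3}$ and $\Gamma_1^{n,3}$ respectively, both inside $C_2$. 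Your computation is more elementary and self-contained for type B, at the cost of being tied to the specific algebraic form of the partition; the paper's version generalizes to any equitable $2$-partition whose first cell is independent. The upgrade to strict inclusion via $|C_2|=2|C_1|$ (whether read off directly or via equality~(\ref{Eq:1})) and the argument for part 3 coincide with the paper's. One small caveat: your closing remark that the $q=2$ argument "cannot just be symmetrized" because $+e_t$ is not an involution applies to your sum-based computation, but not to the paper's proof, whose part-2 step symmetrizes without change since it rests on the independence of $C_1$ rather than on $e_t+e_t=0$.
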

\begin{proof}
1. Suppose that there exists a vertex $x\in C_1$ such that $x\not\in C_4$. Then $x\in C_3$. Therefore, $x=y+e_t$ for some vertex $y\in C_1$,
i.e. $x$ and $y$ are adjacent in $H(n,3)$.
This contradicts Lemma \ref{L:Equitable-Hyperplane}.
Thus, we have $C_1\subseteq C_4$.
On the other hand, Lemma \ref{L:Equitable-Hyperplane} and the equality (\ref{Eq:1}) for $i=1$ and $j=2$ imply that $|C_2|=2|C_1|$. Hence $|C_4|=2|C_1|$. Therefore, $C_1\subset C_4$.

2. The proof for the second case is similar.

3. Since $C_1\subset C_4$, any vertex from $C_4\setminus C_1$ belongs to $C_2$. Therefore, the set $C_2\cap C_4$ is not empty.
\end{proof}

\begin{lemma}\label{L:Hyperplane}
For every $n\geq 1$, every $q\geq 2$ and every $a\in \mathbb{Z}_q$, the subgraph of $H(n,q)$ induced by $\Gamma_a^{n,q}\cup \Gamma_{a+1}^{n,q}$ is connected.
\end{lemma}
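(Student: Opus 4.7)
The plan is to induct on $n$, with $q\ge 2$ and $a\in\mathbb{Z}_q$ ranging arbitrarily. The base case $n=1$ is immediate: $\Gamma_a^{1,q}=\{a\}$ and $\Gamma_{a+1}^{1,q}=\{a+1\}$ differ in the only coordinate, so they form a single edge of $H(1,q)$, which is connected.

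For the inductive step with $n\ge 2$, I would slice the set $\Gamma_a^{n,q}\cup\Gamma_{a+1}^{n,q}$ by the last coordinate: for each $b\in\mathbb{Z}_q$ set $L_b=\{x\in\Gamma_a^{n,q}\cup\Gamma_{a+1}^{n,q}:x_n=b\}$. Deleting the last coordinate identifies $L_b$ with $\Gamma_{a-b}^{n-1,q}\cup\Gamma_{a+1-b}^{n-1,q}$ inside $H(n-1,q)$, which is connected by the inductive hypothesis. Hence each $L_b$ induces a connected subgraph of $H(n,q)$.

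Next I would glue consecutive slices by an explicit edge. For every $b\in\mathbb{Z}_q$ the vertex $x=(a-b,0,\ldots,0,b)$ lies in $\Gamma_a^{n,q}\cap L_b$ (here I use $n\ge 2$, which guarantees such an $x$ exists), and $y=x+e_n$ lies in $\Gamma_{a+1}^{n,q}\cap L_{b+1}$; these two vertices are adjacent in $H(n,q)$. This links $L_b$ to $L_{b+1}$, so chaining around $b=0,1,\ldots,q-1$ shows that the whole induced subgraph is connected.

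There is no real obstacle here; the only point that merits a moment's thought is a short case analysis of which edges of $H(n,q)$ are inherited by the set $\Gamma_a^{n,q}\cup\Gamma_{a+1}^{n,q}$. A single-coordinate move changes the coordinate sum by some $c\in\mathbb{Z}_q\setminus\{0\}$, and staying inside the set forces $c\in\{1,q-1\}$. Consequently the only cross-slice edges are between $L_b$ and $L_{b\pm 1}$, which is exactly what makes the chaining argument work.
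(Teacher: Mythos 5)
Your proof is correct and follows essentially the same route as the paper's: induction on $n$, slicing $\Gamma_a^{n,q}\cup\Gamma_{a+1}^{n,q}$ by the last coordinate, applying the inductive hypothesis to each slice (which the paper states slightly less explicitly than your identification of $L_b$ with $\Gamma_{a-b}^{n-1,q}\cup\Gamma_{a+1-b}^{n-1,q}$), and gluing consecutive slices with the same explicit edge $(a-b,0,\ldots,0,b)\sim(a-b,0,\ldots,0,b+1)$. The closing remark classifying all cross-slice edges is not needed for the argument but does no harm.
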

\begin{proof}
Denote $\Gamma=\Gamma_a^{n,q}\cup \Gamma_{a+1}^{n,q}$.
For $c\in \mathbb{Z}_q$, denote $T_c=\{x\in \mathbb{Z}_q^n:x_n=c\}$.
Let us prove the statement by induction on $n$.
If $n=1$, then $\Gamma$ consists of two adjacent vertices, that is, the subgraph of $H(n,q)$ induced by $\Gamma$ is connected.

Let us prove the induction step for $n\geq 2$. 
By the induction assumption, the subgraph of $H(n,q)$ induced by $\Gamma\cap T_c$ is connected for all $c\in \mathbb{Z}_q$.
Let us show that for any $c\in \mathbb{Z}_q$ there is an edge between $\Gamma\cap T_c$ and $\Gamma\cap T_{c+1}$ in $H(n,q)$.
Let us consider the vertices $x=(a-c,0,\ldots,0,c)$ and $y=(a-c,0,\ldots,0,c+1)$.
Note that $x\in \Gamma_a^{n,q}\cap T_c$ and $y\in \Gamma_{a+1}^{n,q}\cap T_{c+1}$.
In addition, $x$ and $y$ are adjacent in $H(n,q)$.
Therefore, there is an edge between $\Gamma\cap T_c$ and $\Gamma\cap T_{c+1}$ in $H(n,q)$.
This implies that the subgraph of $H(n,q)$ induced by $\Gamma$ is connected.
\end{proof}

\begin{lemma}\label{L:Connectivity-3}
Let $(C_1,C_2)$ be an equitable $2$-partition of $H(n,3)$ of type B.
Then the subgraph of $H(n,3)$ induced by $C_2$ is connected.
\end{lemma}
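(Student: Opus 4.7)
The plan is very short: the conclusion is an immediate specialization of Lemma \ref{L:Hyperplane} to the case $q=3$.

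By the definition of a type $B$ partition, $C_2 = \Gamma_1^{n,3} \cup \Gamma_2^{n,3}$. On the other hand, applying Lemma \ref{L:Hyperplane} with $q = 3$ and $a = 1$, the subgraph of $H(n,3)$ induced by $\Gamma_1^{n,3} \cup \Gamma_2^{n,3}$ is connected (note that $a+1 = 2$ in $\mathbb{Z}_3$). So these two vertex sets coincide and the conclusion follows instantly.

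Thus the only content of the proof I would write is to observe the equality $C_2 = \Gamma_1^{n,3} \cup \Gamma_2^{n,3}$, cite Lemma \ref{L:Hyperplane} with the specific choice $a = 1$, $q = 3$, and conclude. There is no real obstacle: all the connectivity work was done once in Lemma \ref{L:Hyperplane} by the inductive argument that glues slices $T_c$ together via the edges $(a-c, 0, \ldots, 0, c) \sim (a-c, 0, \ldots, 0, c+1)$, and this already covers the present case. The only reason a separate lemma is worth stating is for readability downstream, since the constructions of type $B$ and of the form $\Gamma_a \cup \Gamma_{a+1}$ are introduced for different purposes even though for $q=3$ they happen to agree.
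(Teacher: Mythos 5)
Your proposal is correct and coincides exactly with the paper's own proof: both observe that a type B partition has $C_2=\Gamma_1^{n,3}\cup\Gamma_2^{n,3}$ and then invoke Lemma \ref{L:Hyperplane} with $a=1$, $q=3$. Nothing further is needed.
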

\begin{proof}
Since $(C_1,C_2)$ is an equitable $2$-partition of $H(n,3)$ of type B, we have $C_2=\Gamma_1^{n,3}\cup \Gamma_2^{n,3}$.
Therefore, by Lemma \ref{L:Hyperplane}, the subgraph of $H(n,3)$ induced by $C_2$ is connected.
\end{proof}

\subsection{Associated functions}

Let $G$ be a graph.
For an equitable $2$-partition $(C_1,C_2)$ of $G$ with quotient matrix 
$\begin{pmatrix}
s_{1,1} & s_{1,2}\\
s_{2,1} & s_{2,2}\\
\end{pmatrix}$,
we define its associated function $f$ on the vertex set of $G$ by the following rule: 
$$
f(x)=\begin{cases}
s_{1,2},&\text{if $x\in C_1$;}\\
-s_{2,1},&\text{if $x\in C_2$.}
\end{cases}
$$

The following fact about associated functions of equitable $2$-partitions of graphs is well-known.
We include the proof for completeness.
\begin{lemma}\label{L:Associated-Function}
Let $G$ be a graph. If $(C_1,C_2)$ is an equitable $2$-partition of $G$ with quotient matrix 
$\begin{pmatrix}
s_{1,1} & s_{1,2}\\
s_{2,1} & s_{2,2}\\
\end{pmatrix}$ and associated function $f$, 
then $f$ is an $(s_{1,2}+s_{2,1})$-eigenfunction of $G$.
\end{lemma}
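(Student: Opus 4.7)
The plan is to directly verify the eigenfunction equation (\ref{Eq:Eigenfunction}) with $\lambda=s_{1,2}+s_{2,1}$ by splitting into two cases depending on which cell of the partition contains the vertex $x$. The key input is the defining property of an equitable partition: a vertex in $C_i$ has exactly $s_{i,j}$ neighbors in $C_j$, and inside each cell the function $f$ is constant, so the right-hand side of (\ref{Eq:Eigenfunction}) collapses into a finite sum over the two cells rather than over individual neighbors.

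First, I would take $x\in C_1$. Then $f(x)=s_{1,2}$, and among the neighbors of $x$ there are $s_{1,1}$ with value $s_{1,2}$ and $s_{1,2}$ with value $-s_{2,1}$. Substituting gives
\begin{equation*}
\sum_{y\in N(x)}(f(x)-f(y))=s_{1,1}(s_{1,2}-s_{1,2})+s_{1,2}(s_{1,2}-(-s_{2,1}))=s_{1,2}(s_{1,2}+s_{2,1}),
\end{equation*}
which equals $(s_{1,2}+s_{2,1})f(x)$, as required.

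Next, I would take $x\in C_2$. Then $f(x)=-s_{2,1}$, with $s_{2,1}$ neighbors in $C_1$ (value $s_{1,2}$) and $s_{2,2}$ neighbors in $C_2$ (value $-s_{2,1}$). Substituting gives
\begin{equation*}
\sum_{y\in N(x)}(f(x)-f(y))=s_{2,1}(-s_{2,1}-s_{1,2})+s_{2,2}(-s_{2,1}-(-s_{2,1}))=-s_{2,1}(s_{1,2}+s_{2,1}),
\end{equation*}
which again equals $(s_{1,2}+s_{2,1})f(x)$.

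Since $f$ takes values $s_{1,2}$ and $-s_{2,1}$ (not both zero in any non-degenerate partition), $f\not\equiv 0$ and thus $f$ is an $(s_{1,2}+s_{2,1})$-eigenfunction of $G$. There is no real obstacle here; the whole content of the lemma is that the constants $s_{1,2}$ and $-s_{2,1}$ chosen as values of $f$ are precisely calibrated so that the excess contribution from the off-diagonal cell in each row of the quotient matrix matches $(s_{1,2}+s_{2,1})$ times the value of $f$ on that cell.
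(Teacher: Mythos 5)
Your proposal is correct and follows essentially the same route as the paper: a direct verification of the eigenfunction equation by splitting into the cases $x\in C_1$ and $x\in C_2$, using that $f$ is constant on each cell and that $x$ has exactly $s_{i,j}$ neighbors in $C_j$. The only differences are cosmetic — you write out the second case explicitly where the paper says ``similar,'' and you additionally note $f\not\equiv 0$, which the paper leaves implicit.
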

\begin{proof}
Let us consider a vertex $x\in C_1$.
For any vertex $y\in C_1$ we have $f(x)-f(y)=0$, 
and for any vertex $y\in C_2$ we have $f(x)-f(y)=s_{1,2}+s_{2,1}$.
In addition, the vertex $x$ has exactly $s_{1,2}$ neighbors in $C_2$.
Therefore, the equality~(\ref{Eq:Eigenfunction}) holds for any vertex $x\in C_1$.
The proof for the case $x\in C_2$ is similar.
\end{proof}

\begin{lemma}\label{L:Associated-Corollary}
The following statements hold:
\begin{enumerate}
  
  \item Let $k\geq 1$ and let $(C_1,C_2)$ be an equitable $2$-partition of $H(3k,2)$ of type A.
  If $f$ is the associated function of $(C_1,C_2)$, then $f$ is a $\lambda_{2k}(3k,2)$-eigenfunction of $H(3k,2)$.

  \item Let $(C_1,C_2)$ be an equitable $2$-partition of $H(n,q)$ of type B.
   If $f$ is the associated function of $(C_1,C_2)$, then $f$ is a $\lambda_{n}(n,q)$-eigenfunction of $H(n,q)$.

\end{enumerate}
\begin{proof}
1. It follows from Lemma \ref{L:Associated-Function} and Lemma \ref{L:Multiple-Constr}.

2. It follows from Lemma \ref{L:Associated-Function} and Lemma \ref{L:Equitable-Hyperplane}.
\end{proof}

\end{lemma}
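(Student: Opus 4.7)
The plan is a direct two-step application of the lemmas already established. The key observation is that Lemma \ref{L:Associated-Function} tells us precisely what eigenvalue the associated function of an equitable $2$-partition realizes, namely $s_{1,2}+s_{2,1}$, so both parts reduce to reading off a quotient matrix and checking an arithmetic identity.

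For part 1, first I would invoke Lemma \ref{L:Multiple-Constr}, which says that a partition of type A on $H(3k,2)$ has quotient matrix $\begin{pmatrix} 0 & 3k \\ k & 2k \end{pmatrix}$. Applying Lemma \ref{L:Associated-Function}, the associated function $f$ is an eigenfunction of $H(3k,2)$ with eigenvalue $s_{1,2}+s_{2,1}=3k+k=4k$. Finally, since $\lambda_{2k}(3k,2)=2\cdot 2k=4k$, this is exactly the claimed eigenvalue.

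For part 2, analogously, Lemma \ref{L:Equitable-Hyperplane} supplies the quotient matrix $\begin{pmatrix} 0 & n(q-1) \\ n & n(q-2) \end{pmatrix}$ of a partition of type B on $H(n,q)$. Lemma \ref{L:Associated-Function} then yields that the associated function $f$ is an eigenfunction with eigenvalue $s_{1,2}+s_{2,1}=n(q-1)+n=nq$, which coincides with $\lambda_n(n,q)=q\cdot n$, as required.

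There is essentially no obstacle: both claims are immediate corollaries once the quotient matrix of the partition has been identified. The only point that requires any thought is the arithmetic check that the trace of the off-diagonal entries of the quotient matrix matches the advertised Laplacian eigenvalue of $H(n,q)$, and in both cases this verification is a one-line computation.
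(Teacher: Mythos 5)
Your proposal is correct and follows exactly the paper's own (terse) argument: identify the quotient matrix via Lemma \ref{L:Multiple-Constr} (resp. Lemma \ref{L:Equitable-Hyperplane}) and apply Lemma \ref{L:Associated-Function}, checking that $s_{1,2}+s_{2,1}$ equals $4k=\lambda_{2k}(3k,2)$ (resp. $nq=\lambda_n(n,q)$). You have merely written out the arithmetic the paper leaves implicit.
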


\section{Eigenfunctions in $H(2,2)$ and $H(3,3)$}\label{Sec:Small-Eigenfunctions}

In this section, we define five special functions: two on $H(2,2)$ and three on $H(3,3)$.
We will use these functions in Sections \ref{Sec:q=2} and \ref{Sec:q=3} to construct eigenfunctions of Hamming graphs with two strong nodal domains.

We define a function $\varphi_1$ on $\mathbb{Z}_2^2$ by the following rule:
$$
\varphi_1(x)=\begin{cases}
1,&\text{if $x=(0,0)$;}\\
-1,&\text{if $x=(1,1)$;}\\
0,&\text{otherwise.}
\end{cases}
$$

We define a function $\varphi_2$ on $\mathbb{Z}_2^2$ by the following rule:
$$
\varphi_2(x)=\begin{cases}
1,&\text{if $x=(0,1)$;}\\
-1,&\text{if $x=(1,0)$;}\\
0,&\text{otherwise.}
\end{cases}
$$

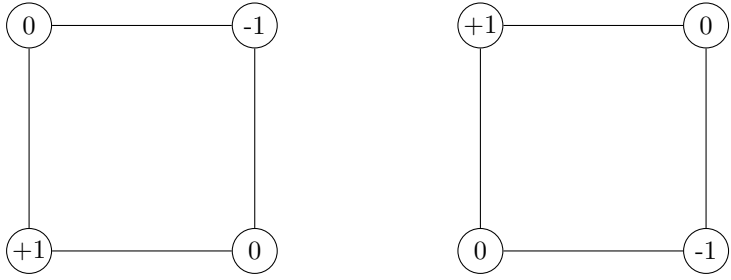
\begin{figure}
	
	\begin{center}
		\begin{minipage}{1.45\linewidth}
			
			\begin{tikzpicture}[scale=3,
				mycirc/.style={circle, minimum size=0.2cm}]
				\tikzstyle{every node}=[draw,circle,fill=white,minimum size=0pt,
				inner sep=0pt]

				
				\node[mycirc,shape=circle,draw=black,text width=0.6cm, label=center:{ +1}] (00) at (0-1,0) {};
				
				\node[mycirc,shape=circle,draw=black,text width=0.6cm,label=center:{ 0}] (10) at (1-1,0) {};
				\node[mycirc,shape=circle,draw=black,text width=0.6cm, label=center:{ 0}] (01) at (0-1,1) {};
				\node[mycirc,shape=circle,draw=black,text width=0.6cm, label=center:{ -1}] (11) at (1-1,1) {};

				\node[mycirc,shape=circle,draw=black,text width=0.6cm,label=center:{ 0} ] (007) at (2-1,0) {};
				
				\node[mycirc,shape=circle,draw=black,text width=0.6cm,label=center:{ -1}]  (107) at (3-1,0) {};
				\node[mycirc,shape=circle,draw=black,text width=0.6cm, label=center:{ +1}] (017) at (2-1,1) {};
				\node[mycirc,shape=circle,draw=black,text width=0.6cm, label=center:{ 0}] (117) at (3-1,1) {};

				
				\path[draw=black]  (007) edge (107);
				\path[draw=black]  (007) edge (017);
				\path[draw=black]  (117) edge (017);
				\path[draw=black]  (117) edge (107);

				\path[draw=black]  (00) edge (10);
				\path[draw=black]  (00) edge (01);
				\path[draw=black]  (11) edge (01);
				\path[draw=black]  (11) edge (10);

			\end{tikzpicture}

		\end{minipage}\hfill
			\caption{Functions $\varphi_1$ and $\varphi_2$} \label{fig:phi}
	\end{center}

\end{figure}

We define a function $\psi_1$ on $\mathbb{Z}_3^3$ by the following rule:
$$
\psi_1(x)=\begin{cases}
1,&\text{if $x\in \{(0,0,0),(1,1,1),(2,2,2)\}$;}\\
-1,&\text{if $x\in \{(1,2,0),(2,0,1),(0,1,2)\}$;}\\
0,&\text{otherwise.}
\end{cases}
$$

We define a function $\psi_2$ on $\mathbb{Z}_3^3$ by the following rule:
$$
\psi_2(x)=\begin{cases}
1,&\text{if $x\in \{(1,0,0),(2,1,1),(0,2,2)\}$;}\\
-1,&\text{if $x\in \{(2,2,0),(0,0,1),(1,1,2)\}$;}\\
0,&\text{otherwise.}
\end{cases}
$$

We define a function $\psi_3$ on $\mathbb{Z}_3^3$ by the following rule:
$$
\psi_3(x)=\begin{cases}
1,&\text{if $x\in \{(1,1,0),(2,2,1),(0,0,2)\}$;}\\
-1,&\text{if $x\in \{(2,0,0),(0,1,1),(1,2,2)\}$;}\\
0,&\text{otherwise.}
\end{cases}
$$

The functions $\varphi_1$ and $\varphi_2$ are shown in Figure \ref{fig:phi}, 
and the functions $\psi_1$, $\psi_2$ and $\psi_3$ are shown in Figure \ref{fig:psi}.

The following result follows directly from the definition of an eigenfunction.

\begin{lemma}\label{L:Small-Eigenfunctions}

\begin{enumerate}
  The following statements hold:
  
  \item The functions $\varphi_1$ and $\varphi_2$ are $\lambda_1(2,2)$-eigenfunctions of $H(2,2)$.
  
  \item The functions $\psi_1$, $\psi_2$ and $\psi_3$ are $\lambda_2(3,3)$-eigenfunctions of $H(3,3)$.

\end{enumerate}

\end{lemma}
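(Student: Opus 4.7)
The plan is direct verification of the eigenfunction identity (\ref{Eq:Eigenfunction}) at every vertex, exploiting one convenient simplification: in both graphs the target eigenvalue equals the vertex degree, since $H(2,2)$ has degree $2 = \lambda_1(2,2)$ and $H(3,3)$ has degree $6 = \lambda_2(3,3)$. When $\lambda = d$, cancelling the $d\cdot f(x)$ term from both sides of (\ref{Eq:Eigenfunction}) collapses the eigenfunction condition to
$$\sum_{y \in N(x)} f(y) = 0 \qquad \text{for every vertex } x,$$
that is, the value vector lies in the kernel of the adjacency matrix. This is the only identity I would check pointwise.

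For $\varphi_1$ and $\varphi_2$ on $H(2,2)$ the verification is immediate: there are only four vertices, each has two neighbors, and a small table handles both functions.

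For $\psi_1, \psi_2, \psi_3$ on $H(3,3)$ I would organise the $27$ vertices by cosets of the diagonal subgroup $L = \{(a,a,a) : a \in \mathbb{Z}_3\}$. Each $\psi_i$ is constant on $L$-cosets, with $S_+(\psi_i)$ and $S_-(\psi_i)$ each being exactly one such coset, and the remaining seven cosets all mapped to $0$. Moreover every $\psi_i$ is invariant under the translation $x \mapsto x+(1,1,1)$, so the verification collapses to a single representative from each of the nine $L$-cosets, i.e.\ nine cases per function. For each representative $x$, I would determine which $L$-cosets the six neighbours $x + c\, e_t$ ($c\in\{1,2\}$, $t \in [3]$) occupy and read off the neighbour-sum. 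The cases split cleanly: if $x$ lies in the support of $\psi_i$, then the six neighbour-cosets miss both $S_+(\psi_i)$ and $S_-(\psi_i)$ (because $c\, e_t$ is never congruent modulo $L$ to the difference between the two support cosets), so the sum is trivially $0$; if $\psi_i(x) = 0$, then either no neighbour meets the support of $\psi_i$ at all, or exactly one neighbour lies in $S_+(\psi_i)$ and exactly one lies in $S_-(\psi_i)$, so the $+1$ and $-1$ contributions cancel.

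The only obstacle is bookkeeping: the lemma really is a routine check dressed up as five distinct functions, and once the coset structure above is in place the casework is short, uniform across the three $\psi_i$, and contains no conceptual subtlety.
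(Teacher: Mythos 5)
Your proposal is correct: the reduction to $\sum_{y\in N(x)}f(y)=0$ is valid because $\lambda_1(2,2)=2$ and $\lambda_2(3,3)=6$ coincide with the vertex degrees, and the coset-of-$L$ bookkeeping for the $\psi_i$ checks out (the difference of the two support cosets is $\pm(1,2,0)$ modulo $L$, which is indeed never of the form $c\,e_t$ modulo $L$, and for each zero vertex the counts of neighbours in $S_+$ and $S_-$ agree). The paper offers no proof at all, stating only that the lemma follows directly from the definition of an eigenfunction, so your argument is simply an organized version of the routine verification the authors omit.
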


We will also need the following fact.

\begin{lemma}\label{L:U_2(3,3)}
There exists a $\lambda_{2}(3,3)$-eigenfunction $h$ of $H(3,3)$ such that $\mathrm{SND}(h)=2$.
\end{lemma}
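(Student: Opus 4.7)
The plan is to take $h = \psi_1 + \psi_2 + \psi_3$, where $\psi_1,\psi_2,\psi_3$ are the three functions on $\mathbb{Z}_3^3$ defined just above. Since each $\psi_i$ is a $\lambda_2(3,3)$-eigenfunction of $H(3,3)$ by Lemma \ref{L:Small-Eigenfunctions}, and $h(0,0,0) = 1 + 0 + 0 = 1 \neq 0$, linearity immediately gives that $h$ is a $\lambda_2(3,3)$-eigenfunction.

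The first step is to rule out cancellations in the sum. A direct inspection of the definitions shows that every vertex in the support of $\psi_i$ satisfies $x_1+x_2+x_3 \equiv i-1 \pmod{3}$, so the supports of $\psi_1,\psi_2,\psi_3$ lie in the three distinct hyperplanes $\Gamma_0^{3,3},\Gamma_1^{3,3},\Gamma_2^{3,3}$, which partition $\mathbb{Z}_3^3$. Hence the supports are pairwise disjoint, and
\[
S_{+}(h) = S_{+}(\psi_1)\cup S_{+}(\psi_2)\cup S_{+}(\psi_3),\qquad S_{-}(h) = S_{-}(\psi_1)\cup S_{-}(\psi_2)\cup S_{-}(\psi_3),
\]
each of cardinality $9$, while $h\equiv \pm 1$ on these sets.

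The main step is to verify that both $S_{+}(h)$ and $S_{-}(h)$ induce connected subgraphs of $H(3,3)$. I would do this by exhibiting an explicit $9$-cycle in $H(3,3)$ through each set. For the positive support, the sequence
\[
(0,0,0),\,(1,0,0),\,(1,1,0),\,(1,1,1),\,(2,1,1),\,(2,2,1),\,(2,2,2),\,(0,2,2),\,(0,0,2),\,(0,0,0)
\]
visits every vertex of $S_{+}(h)$ exactly once, with consecutive vertices differing in exactly one coordinate. The analogous cycle
\[
(1,2,0),\,(2,2,0),\,(2,0,0),\,(2,0,1),\,(0,0,1),\,(0,1,1),\,(0,1,2),\,(1,1,2),\,(1,2,2),\,(1,2,0)
\]
does the same job for $S_{-}(h)$. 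This yields $\mathrm{SND}(h) = 2$.

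There is no real conceptual obstacle: the three functions $\psi_i$ are engineered so that their supports live in the three parallel cosets $\Gamma_a^{3,3}$, and summing them aligns the $\pm 1$ values onto $18$ vertices that are easily threaded by the Hamilton cycles above. The only work is the finite verification that each listed consecutive pair is indeed an edge of $H(3,3)$, which is routine.
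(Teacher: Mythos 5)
Your proposal is correct and takes the same route as the paper: both define $h=\psi_1+\psi_2+\psi_3$, invoke Lemma \ref{L:Small-Eigenfunctions} for the eigenvalue, and then check connectivity of $S_{+}(h)$ and $S_{-}(h)$ (which the paper leaves as ``easy to check'' and you make explicit with correct Hamilton cycles, after rightly noting that the three supports lie in distinct cosets $\Gamma_a^{3,3}$ so no cancellation occurs). No issues.
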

\begin{proof}
Let $h=\psi_1+\psi_2+\psi_3$.
Let us show that $h$ satisfies the conditions of the lemma.
By Lemma \ref{L:Small-Eigenfunctions}, the functions $\psi_1$, $\psi_2$ and $\psi_3$ are $\lambda_2(3,3)$-eigenfunctions of $H(3,3)$.
Therefore, $h$ is also a $\lambda_{2}(3,3)$-eigenfunction of $H(3,3)$.
In addition, it is easy to check that the subgraphs of $H(3,3)$ induced by $S_{+}(h)$ and $S_{-}(h)$ are connected.
Therefore, we have $\mathrm{SND}(h)=2$.
\end{proof}

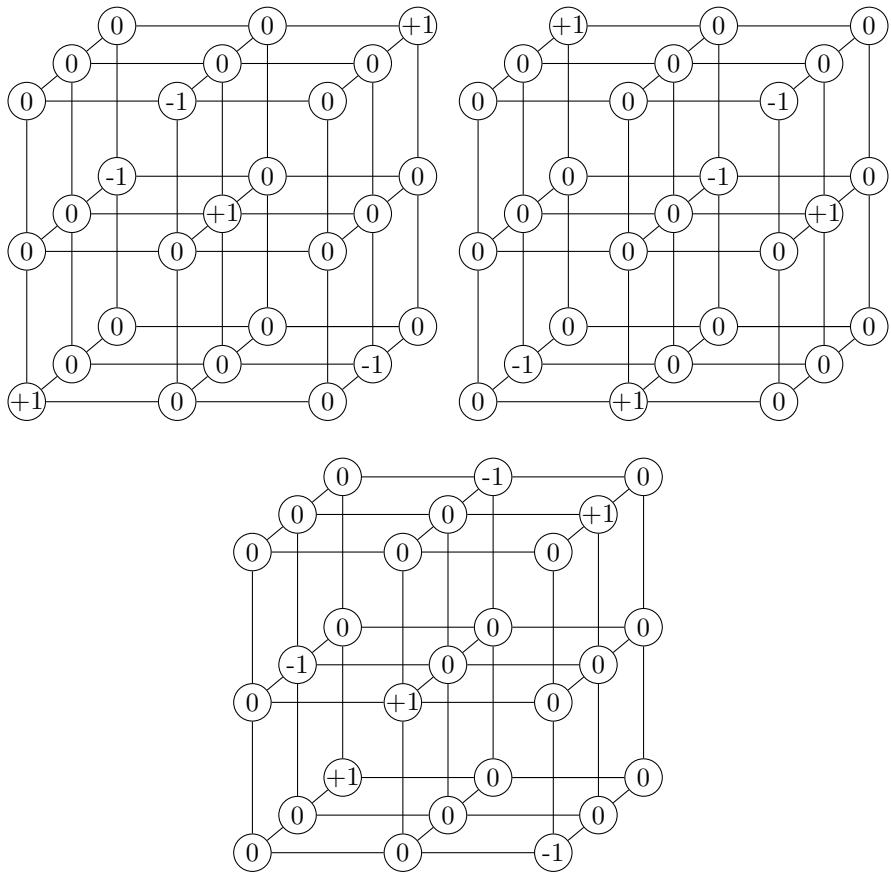
\begin{figure}
	\begin{center}
			
			\begin{tikzpicture}[scale=2,
				mycirc/.style={circle, minimum size=0.2cm}]
				\tikzstyle{every node}=[draw,circle,fill=white,minimum size=0pt,
				inner sep=0pt]
				\usetikzlibrary{math}
				\tikzmath{\x1=0.3; \x2=0.25;}
				
				
				\node[mycirc,shape=circle,draw=black,text width=0.5cm, label=center:{ +1}] (000) at (0,0) {};
				\node[mycirc,shape=circle,draw=black,text width=0.5cm,label=center:{ 0}] (100) at (1,0) {};
				\node[mycirc,shape=circle,draw=black,text width=0.5cm, label=center:{ 0}] (010) at (0,1) {};
				\node[mycirc,shape=circle,draw=black,text width=0.5cm, label=center:{ 0}] (110) at (1,1) {};

				\node[mycirc,shape=circle,draw=black,text width=0.5cm,label=center:{ 0} ] (001) at (0+\x1,0+\x2) {};
				
				\node[mycirc,shape=circle,draw=black,text width=0.5cm, label=center:{ 0}] (101) at (1+\x1,0+\x2) {};
				\node[mycirc,shape=circle,draw=black,text width=0.5cm, label=center:{ 0}] (011) at (0+\x1,1+\x2) {};
				\node[mycirc,shape=circle,draw=black,text width=0.5cm,label=center:{ +1} ] (111) at (1+\x1,1+\x2) {};

			\node[mycirc,shape=circle,draw=black,text width=0.5cm,label=center:{ 0}] (200) at (1+1,0) {};
			\node[mycirc,shape=circle,draw=black,text width=0.5cm,label=center:{ 0} ] (210) at (1+1,1) {};
			\node[mycirc,shape=circle,draw=black,text width=0.5cm, label=center:{ -1}] (201) at (1+\x1+1,0+\x2) {};
			\node[mycirc,shape=circle,draw=black,text width=0.5cm, label=center:{ 0}] (211) at (1+\x1+1,1+\x2) {};

			\node[mycirc,shape=circle,draw=black,text width=0.5cm,label=center:{ 0} ] (020) at (0,1+1) {};
			\node[mycirc,shape=circle,draw=black,text width=0.5cm, label=center:{ -1}] (120) at (1,1+1) {};
			\node[mycirc,shape=circle,draw=black,text width=0.5cm, label=center:{ 0}] (021) at (0+\x1,1+\x2+1) {};
			\node[mycirc,shape=circle,draw=black,text width=0.5cm, label=center:{ 0}] (121) at (1+\x1,1+\x2+1) {};

			\node[mycirc,shape=circle,draw=black,text width=0.5cm, label=center:{ 0}] (002) at (0+\x1+\x1,0+\x2+\x2) {};
			
			\node[mycirc,shape=circle,draw=black,text width=0.5cm,label=center:{ 0} ] (102) at (1+\x1+\x1,0+\x2+\x2) {};
			\node[mycirc,shape=circle,draw=black,text width=0.5cm, label=center:{ -1}] (012) at (0+\x1+\x1,1+\x2+\x2) {};
			\node[mycirc,shape=circle,draw=black,text width=0.5cm, label=center:{ 0}] (112) at (1+\x1+\x1,1+\x2+\x2) {};

			\node[mycirc,shape=circle,draw=black,text width=0.5cm, label=center:{ 0}] (220) at (2,2) {};
			\node[mycirc,shape=circle,draw=black,text width=0.5cm, label=center:{ 0}] (221) at (2+\x1,2+\x2) {};
			\node[mycirc,shape=circle,draw=black,text width=0.5cm,label=center:{ 0} ] (202) at (2+\x1+\x1,\x2+\x2) {};
			\node[mycirc,shape=circle,draw=black,text width=0.5cm, label=center:{ 0}] (212) at (2+\x1+\x1,\x2+\x2+1) {};
			\node[mycirc,shape=circle,draw=black,text width=0.5cm, label=center:{ 0}] (022) at (0+\x1+\x1,2+\x2+\x2) {};
			\node[mycirc,shape=circle,draw=black,text width=0.5cm, label=center:{ 0}] (122) at (1+\x1+\x1,2+\x2+\x2) {};
			
			\node[mycirc,shape=circle,draw=black,text width=0.5cm,label=center:{ +1} ] (222) at (2+\x1+\x1,2+\x2+\x2) {};


				
				\path[draw=black]  (000) edge (100);
				\path[draw=black]  (000) edge (010);
				\path[draw=black]  (000) edge (001);

				\path[draw=black]  (100) edge (200);
				\path[draw=black]  (100) edge (110);	
				\path[draw=black]  (100) edge (101);		
				
				\path[draw=black]  (010) edge (110);	
				\path[draw=black]  (010) edge (011);	
				\path[draw=black]  (010) edge (020);
				
				\path[draw=black]  (001) edge (101);	
				\path[draw=black]  (001) edge (011);	
				\path[draw=black]  (001) edge (002);

				\path[draw=black]  (110) edge (210);
				\path[draw=black]  (110) edge (120);	
				\path[draw=black]  (110) edge (111);	
				
				\path[draw=black]  (101) edge (201);
				\path[draw=black]  (101) edge (111);	
				\path[draw=black]  (101) edge (102);	
				
				\path[draw=black]  (011) edge (111);	
				\path[draw=black]  (011) edge (012);	
				\path[draw=black]  (011) edge (021);
				
				\path[draw=black]  (111) edge (211);
				\path[draw=black]  (111) edge (121);	
				\path[draw=black]  (111) edge (112);

				\path[draw=black]  (200) edge (210);
				\path[draw=black]  (200) edge (201);
				\path[draw=black]  (020) edge (120);
				\path[draw=black]  (020) edge (021);
				\path[draw=black]  (002) edge (102);
				\path[draw=black]  (002) edge (012);
				
				\path[draw=black]  (210) edge (220);
				\path[draw=black]  (210) edge (211);	
				\path[draw=black]  (201) edge (202);
				\path[draw=black]  (201) edge (211);				
				\path[draw=black]  (120) edge (220);
				\path[draw=black]  (120) edge (121);
				\path[draw=black]  (021) edge (022);
				\path[draw=black]  (021) edge (121);
				\path[draw=black]  (102) edge (202);
				\path[draw=black]  (102) edge (112);
				\path[draw=black]  (012) edge (022);
				\path[draw=black]  (012) edge (112);

				\path[draw=black]  (211) edge (221);
				\path[draw=black]  (211) edge (212);

				\path[draw=black]  (121) edge (221);
				\path[draw=black]  (121) edge (122);

				\path[draw=black]  (102) edge (202);
				\path[draw=black]  (112) edge (122);
				\path[draw=black]  (112) edge (212);

				\path[draw=black]  (202) edge (212);
				\path[draw=black]  (220) edge (221);
				\path[draw=black]  (022) edge (122);
				
				\path[draw=black]  (222) edge (122);
				\path[draw=black]  (222) edge (212);
				\path[draw=black]  (222) edge (221);

			\node[mycirc,shape=circle,draw=black,text width=0.5cm, label=center:{ 0}] (0007) at (3+0,0) {};
			\node[mycirc,shape=circle,draw=black,text width=0.5cm,label=center:{ +1}] (1007) at (3+1,0) {};
			\node[mycirc,shape=circle,draw=black,text width=0.5cm, label=center:{ 0}] (0107) at (3+0,1) {};
			\node[mycirc,shape=circle,draw=black,text width=0.5cm, label=center:{ 0}] (1107) at (3+1,1) {};

			\node[mycirc,shape=circle,draw=black,text width=0.5cm,label=center:{ -1} ] (0017) at (3+0+\x1,0+\x2) {};
			
			\node[mycirc,shape=circle,draw=black,text width=0.5cm, label=center:{ 0}] (1017) at (3+1+\x1,0+\x2) {};
			\node[mycirc,shape=circle,draw=black,text width=0.5cm, label=center:{ 0}] (0117) at (3+0+\x1,1+\x2) {};
			\node[mycirc,shape=circle,draw=black,text width=0.5cm,label=center:{0} ] (1117) at (3+1+\x1,1+\x2) {};

			W
			\node[mycirc,shape=circle,draw=black,text width=0.5cm,label=center:{ 0}] (2007) at (3+1+1,0) {};
			\node[mycirc,shape=circle,draw=black,text width=0.5cm,label=center:{ 0} ] (2107) at (3+1+1,1) {};
			\node[mycirc,shape=circle,draw=black,text width=0.5cm, label=center:{ 0}] (2017) at (3+1+\x1+1,0+\x2) {};
			\node[mycirc,shape=circle,draw=black,text width=0.5cm, label=center:{ +1}] (2117) at (3+1+\x1+1,1+\x2) {};

			\node[mycirc,shape=circle,draw=black,text width=0.5cm,label=center:{ 0} ] (0207) at (3+0,1+1) {};
			\node[mycirc,shape=circle,draw=black,text width=0.5cm, label=center:{ 0}] (1207) at (3+1,1+1) {};
			\node[mycirc,shape=circle,draw=black,text width=0.5cm, label=center:{ 0}] (0217) at (3+0+\x1,1+\x2+1) {};
			\node[mycirc,shape=circle,draw=black,text width=0.5cm, label=center:{ 0}] (1217) at (3+1+\x1,1+\x2+1) {};

			\node[mycirc,shape=circle,draw=black,text width=0.5cm, label=center:{ 0}] (0027) at (3+0+\x1+\x1,0+\x2+\x2) {};
			
			\node[mycirc,shape=circle,draw=black,text width=0.5cm,label=center:{ 0} ] (1027) at (3+1+\x1+\x1,0+\x2+\x2) {};
			\node[mycirc,shape=circle,draw=black,text width=0.5cm, label=center:{ 0}] (0127) at (3+0+\x1+\x1,1+\x2+\x2) {};
			\node[mycirc,shape=circle,draw=black,text width=0.5cm, label=center:{ -1}] (1127) at (3+1+\x1+\x1,1+\x2+\x2) {};

			\node[mycirc,shape=circle,draw=black,text width=0.5cm, label=center:{ -1}] (2207) at (3+2,2) {};
			\node[mycirc,shape=circle,draw=black,text width=0.5cm, label=center:{ 0}] (2217) at (3+2+\x1,2+\x2) {};
			\node[mycirc,shape=circle,draw=black,text width=0.5cm,label=center:{ 0} ] (2027) at (3+2+\x1+\x1,\x2+\x2) {};
			\node[mycirc,shape=circle,draw=black,text width=0.5cm, label=center:{ 0}] (2127) at (3+2+\x1+\x1,\x2+\x2+1) {};
			\node[mycirc,shape=circle,draw=black,text width=0.5cm, label=center:{ +1}] (0227) at (3+0+\x1+\x1,2+\x2+\x2) {};
			\node[mycirc,shape=circle,draw=black,text width=0.5cm, label=center:{ 0}] (1227) at (3+1+\x1+\x1,2+\x2+\x2) {};
			
			\node[mycirc,shape=circle,draw=black,text width=0.5cm,label=center:{ 0} ] (2227) at (3+2+\x1+\x1,2+\x2+\x2) {};


			
			\path[draw=black]  (0007) edge (1007);
			\path[draw=black]  (0007) edge (0107);
			\path[draw=black]  (0007) edge (0017);

			\path[draw=black]  (1007) edge (2007);
			\path[draw=black]  (1007) edge (1107);	
			\path[draw=black]  (1007) edge (1017);		
			
			\path[draw=black]  (0107) edge (1107);	
			\path[draw=black]  (0107) edge (0117);	
			\path[draw=black]  (0107) edge (0207);
			
			\path[draw=black]  (0017) edge (1017);	
			\path[draw=black]  (0017) edge (0117);	
			\path[draw=black]  (0017) edge (0027);

			\path[draw=black]  (1107) edge (2107);
			\path[draw=black]  (1107) edge (1207);	
			\path[draw=black]  (1107) edge (1117);	
			
			\path[draw=black]  (1017) edge (2017);
			\path[draw=black]  (1017) edge (1117);	
			\path[draw=black]  (1017) edge (1027);	
			
			\path[draw=black]  (0117) edge (1117);	
			\path[draw=black]  (0117) edge (0127);	
			\path[draw=black]  (0117) edge (0217);
			
			\path[draw=black]  (1117) edge (2117);
			\path[draw=black]  (1117) edge (1217);	
			\path[draw=black]  (1117) edge (1127);

			\path[draw=black]  (2007) edge (2107);
			\path[draw=black]  (2007) edge (2017);
			\path[draw=black]  (0207) edge (1207);
			\path[draw=black]  (0207) edge (0217);
			\path[draw=black]  (0027) edge (1027);
			\path[draw=black]  (0027) edge (0127);
			
			\path[draw=black]  (2107) edge (2207);
			\path[draw=black]  (2107) edge (2117);	
			\path[draw=black]  (2017) edge (2027);
			\path[draw=black]  (2017) edge (2117);				
			\path[draw=black]  (1207) edge (2207);
			\path[draw=black]  (1207) edge (1217);
			\path[draw=black]  (0217) edge (0227);
			\path[draw=black]  (0217) edge (1217);
			\path[draw=black]  (1027) edge (2027);
			\path[draw=black]  (1027) edge (1127);
			\path[draw=black]  (0127) edge (0227);
			\path[draw=black]  (0127) edge (1127);

			\path[draw=black]  (2117) edge (2217);
			\path[draw=black]  (2117) edge (2127);
			
			\path[draw=black]  (1217) edge (2217);
			\path[draw=black]  (1217) edge (1227);
			
			\path[draw=black]  (1027) edge (2027);
			\path[draw=black]  (1127) edge (1227);
			\path[draw=black]  (1127) edge (2127);

			\path[draw=black]  (2027) edge (2127);
			\path[draw=black]  (2207) edge (2217);
			\path[draw=black]  (0227) edge (1227);
			
			\path[draw=black]  (2227) edge (1227);
			\path[draw=black]  (2227) edge (2127);
			\path[draw=black]  (2227) edge (2217);

			
			\node[mycirc,shape=circle,draw=black,text width=0.5cm, label=center:{ 0}] (0008) at (1.5+0,0-3) {};
			\node[mycirc,shape=circle,draw=black,text width=0.5cm,label=center:{ 0}] (1008) at (1.5+1,0-3) {};
			\node[mycirc,shape=circle,draw=black,text width=0.5cm, label=center:{ 0}] (0108) at (1.5+0,1-3) {};
			\node[mycirc,shape=circle,draw=black,text width=0.5cm, label=center:{ +1}] (1108) at (1.5+1,1-3) {};

			\node[mycirc,shape=circle,draw=black,text width=0.5cm,label=center:{ 0} ] (0018) at (1.5+0+\x1,0+\x2-3) {};
			
			\node[mycirc,shape=circle,draw=black,text width=0.5cm, label=center:{ 0}] (1018) at (1.5+1+\x1,0+\x2-3) {};
			\node[mycirc,shape=circle,draw=black,text width=0.5cm, label=center:{ -1}] (0118) at (1.5+0+\x1,1+\x2-3) {};
			\node[mycirc,shape=circle,draw=black,text width=0.5cm,label=center:{ 0} ] (1118) at (1.5+1+\x1,1+\x2-3) {};

			\node[mycirc,shape=circle,draw=black,text width=0.5cm,label=center:{ -1}] (2008) at (1.5+1+1,0-3) {};
			\node[mycirc,shape=circle,draw=black,text width=0.5cm,label=center:{ 0} ] (2108) at (1.5+1+1,1-3) {};
			\node[mycirc,shape=circle,draw=black,text width=0.5cm, label=center:{ 0}] (2018) at (1.5+1+\x1+1,0+\x2-3) {};
			\node[mycirc,shape=circle,draw=black,text width=0.5cm, label=center:{ 0}] (2118) at (1.5+1+\x1+1,1+\x2-3) {};

			\node[mycirc,shape=circle,draw=black,text width=0.5cm,label=center:{ 0} ] (0208) at (1.5+0,1+1-3) {};
			\node[mycirc,shape=circle,draw=black,text width=0.5cm, label=center:{ 0}] (1208) at (1.5+1,1+1-3) {};
			\node[mycirc,shape=circle,draw=black,text width=0.5cm, label=center:{ 0}] (0218) at (1.5+0+\x1,1+\x2+1-3) {};
			\node[mycirc,shape=circle,draw=black,text width=0.5cm, label=center:{ 0}] (1218) at (1.5+1+\x1,1+\x2+1-3) {};

			\node[mycirc,shape=circle,draw=black,text width=0.5cm, label=center:{ +1}] (0028) at (1.5+0+\x1+\x1,0+\x2+\x2-3) {};
			
			\node[mycirc,shape=circle,draw=black,text width=0.5cm,label=center:{ 0} ] (1028) at (1.5+1+\x1+\x1,0+\x2+\x2-3) {};
			\node[mycirc,shape=circle,draw=black,text width=0.5cm, label=center:{ 0}] (0128) at (1.5+0+\x1+\x1,1+\x2+\x2-3) {};
			\node[mycirc,shape=circle,draw=black,text width=0.5cm, label=center:{ 0}] (1128) at (1.5+1+\x1+\x1,1+\x2+\x2-3) {};

			\node[mycirc,shape=circle,draw=black,text width=0.5cm, label=center:{ 0}] (2208) at (1.5+2,2-3) {};
			\node[mycirc,shape=circle,draw=black,text width=0.5cm, label=center:{ +1}] (2218) at (1.5+2+\x1,2+\x2-3) {};
			\node[mycirc,shape=circle,draw=black,text width=0.5cm,label=center:{ 0} ] (2028) at (1.5+2+\x1+\x1,\x2+\x2-3) {};
			\node[mycirc,shape=circle,draw=black,text width=0.5cm, label=center:{ 0}] (2128) at (1.5+2+\x1+\x1,\x2+\x2+1-3) {};
			\node[mycirc,shape=circle,draw=black,text width=0.5cm, label=center:{ 0}] (0228) at (1.5+0+\x1+\x1,2+\x2+\x2-3) {};
			\node[mycirc,shape=circle,draw=black,text width=0.5cm, label=center:{ -1}] (1228) at (1.5+1+\x1+\x1,2+\x2+\x2-3) {};
			
			\node[mycirc,shape=circle,draw=black,text width=0.5cm,label=center:{ 0} ] (2228) at (1.5+2+\x1+\x1,2+\x2+\x2-3) {};


			
			\path[draw=black]  (0008) edge (1008);
			\path[draw=black]  (0008) edge (0108);
			\path[draw=black]  (0008) edge (0018);

			\path[draw=black]  (1008) edge (2008);
			\path[draw=black]  (1008) edge (1108);	
			\path[draw=black]  (1008) edge (1018);		
			
			\path[draw=black]  (0108) edge (1108);	
			\path[draw=black]  (0108) edge (0118);	
			\path[draw=black]  (0108) edge (0208);
			
			\path[draw=black]  (0018) edge (1018);	
			\path[draw=black]  (0018) edge (0118);	
			\path[draw=black]  (0018) edge (0028);

			\path[draw=black]  (1108) edge (2108);
			\path[draw=black]  (1108) edge (1208);	
			\path[draw=black]  (1108) edge (1118);	
			
			\path[draw=black]  (1018) edge (2018);
			\path[draw=black]  (1018) edge (1118);	
			\path[draw=black]  (1018) edge (1028);	
			
			\path[draw=black]  (0118) edge (1118);	
			\path[draw=black]  (0118) edge (0128);	
			\path[draw=black]  (0118) edge (0218);
			
			\path[draw=black]  (1118) edge (2118);
			\path[draw=black]  (1118) edge (1218);	
			\path[draw=black]  (1118) edge (1128);

			\path[draw=black]  (2008) edge (2108);
			\path[draw=black]  (2008) edge (2018);
			\path[draw=black]  (0208) edge (1208);
			\path[draw=black]  (0208) edge (0218);
			\path[draw=black]  (0028) edge (1028);
			\path[draw=black]  (0028) edge (0128);
			
			\path[draw=black]  (2108) edge (2208);
			\path[draw=black]  (2108) edge (2118);	
			\path[draw=black]  (2018) edge (2028);
			\path[draw=black]  (2018) edge (2118);				
			\path[draw=black]  (1208) edge (2208);
			\path[draw=black]  (1208) edge (1218);
			\path[draw=black]  (0218) edge (0228);
			\path[draw=black]  (0218) edge (1218);
			\path[draw=black]  (1028) edge (2028);
			\path[draw=black]  (1028) edge (1128);
			\path[draw=black]  (0128) edge (0228);
			\path[draw=black]  (0128) edge (1128);

			\path[draw=black]  (2118) edge (2218);
			\path[draw=black]  (2118) edge (2128);
			
			\path[draw=black]  (1218) edge (2218);
			\path[draw=black]  (1218) edge (1228);
			
			\path[draw=black]  (1028) edge (2028);
			\path[draw=black]  (1128) edge (1228);
			\path[draw=black]  (1128) edge (2128);

			\path[draw=black]  (2028) edge (2128);
			\path[draw=black]  (2208) edge (2218);
			\path[draw=black]  (0228) edge (1228);
			
			\path[draw=black]  (2228) edge (1228);
			\path[draw=black]  (2228) edge (2128);
			\path[draw=black]  (2228) edge (2218);			
				
			\end{tikzpicture}

		
	\end{center}
	\caption{Functions $\psi_1$, $\psi_2$ and $\psi_3$} \label{fig:psi}
\end{figure}

\section{Case $q=2$}\label{Sec:q=2}

In this section, we consider Problem \ref{Problem:2}. The main result of this section is the following.

\begin{theorem}\label{Th:1}
The following statements hold:

\begin{enumerate}
  
  \item For every $k\geq 1$ and every $n\geq 3k+2$, there exists a $\lambda_{2k+1}(n,2)$-eigenfunction $f$ of $H(n,2)$ such that $\mathrm{SND}(f)=2$.
  
  \item For every $k\geq 1$ and every $n\geq 3k+4$, there exists a $\lambda_{2k+2}(n,2)$-eigenfunction $f$ of $H(n,2)$ such that $\mathrm{SND}(f)=2$.
\end{enumerate}

\end{theorem}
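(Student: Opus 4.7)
The plan is to construct explicit $\lambda_{2k+1}$- and $\lambda_{2k+2}$-eigenfunctions with exactly two strong nodal domains on the minimal cubes $H(3k+2,2)$ and $H(3k+4,2)$, and then invoke Corollary~\ref{Cor:N-E} to extend each construction to all $n$ above the respective threshold. The common building block is the associated function $g$ of a type A equitable 2-partition $(C_1,C_2)$ of $H(3k,2)$, which by Lemma~\ref{L:Associated-Corollary}(1) is a $\lambda_{2k}(3k,2)$-eigenfunction taking the constant values $3k$ on $C_1$ and $-k$ on $C_2$. Together with the $\lambda_1(2,2)$-eigenfunctions $\varphi_1,\varphi_2$ of Section~\ref{Sec:Small-Eigenfunctions} and Corollary~\ref{Corollary:Product}, this lets us raise the eigenvalue index by one (Part~1) or two (Part~2) via a translation-sum scheme: sums of translates $g(\cdot+v)$ multiplied by products of $\varphi$'s, chosen so that the fibers of $S_{\pm}$ over the added $H(2,2)$-factors merge into single connected components.

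For Part~1, set
\[
f(x,y) \;=\; g(x)\,\varphi_1(y) + g(x+e_t)\,\varphi_2(y)
\]
on $H(3k+2,2) = H(3k,2)\,\square\,H(2,2)$, for any $t\in[3k]$. Each summand is a $\lambda_{2k+1}$-eigenfunction by Corollary~\ref{Corollary:Product} (translation by $e_t$ is a graph automorphism of $H(3k,2)$, so $g(\cdot+e_t)$ is again a $\lambda_{2k}$-eigenfunction), and hence so is $f$. Decomposing $S_+(f)$ over $y\in\mathbb{Z}_2^2$ yields four fibers: $C_1\times\{(0,0)\}$, $C_2\times\{(1,1)\}$, $(C_1+e_t)\times\{(0,1)\}$, and $(C_2+e_t)\times\{(1,0)\}$. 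Lemma~\ref{L:Connectivity} and graph automorphism give connectivity of each large fiber, while the three conclusions of Lemma~\ref{L:Translation}, namely $C_1\subset C_2+e_t$, $C_1+e_t\subset C_2$, and $C_2\cap(C_2+e_t)\neq\emptyset$, supply same-$x$ edges between the $y$-adjacent fibers that glue all four pieces into one component. By the sign-symmetry $f\leftrightarrow -f$, $S_-(f)$ is connected too, so $\mathrm{SND}(f)=2$. For Part~2 I iterate with two translations $t,s\in[3k]$ lying in \emph{distinct blocks} of the multiplication construction of Lemma~\ref{L:Multiple-Constr} (i.e.\ $t\in\{jk+1,\ldots,(j+1)k\}$ and $s\in\{j'k+1,\ldots,(j'+1)k\}$ with $j\neq j'$), and on $H(3k+4,2)$ set
\begin{align*}
f(x,y,z) &= g(x)\varphi_1(y)\varphi_1(z) + g(x+e_t)\varphi_2(y)\varphi_1(z) \\
&\quad + g(x+e_s)\varphi_1(y)\varphi_2(z) + g(x+e_t+e_s)\varphi_2(y)\varphi_2(z).
\end{align*}
Two applications of Corollary~\ref{Corollary:Product} give a $\lambda_{2k+2}$-eigenfunction; $S_+(f)$ now has 16 fibers over $(y,z)\in\mathbb{Z}_2^4$, each of which is a translate $C_i+v$ with $v\in\{0,e_t,e_s,e_t+e_s\}$, and an analogous gluing argument yields $\mathrm{SND}(f)=2$. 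Corollary~\ref{Cor:N-E} then extends to all $n\geq 3k+4$.

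The main obstacle I anticipate is a generalized version of Lemma~\ref{L:Translation} needed in Part~2: for $t,s$ in distinct blocks one must prove $C_1\subset C_2+e_t+e_s$, $C_1+e_t+e_s\subset C_2$, and $C_2\cap(C_2+e_t+e_s)\neq\emptyset$ by a direct block-sum analysis of the multiplication construction. The distinct-block hypothesis is essential: if $t$ and $s$ lie in the same block, then $e_t+e_s$ flips two coordinates of that block and hence preserves every block sum of the base partition of $H(3,2)$, giving $C_1+e_t+e_s=C_1$ and breaking the first containment outright. Once this lemma is available, the 16-fiber gluing becomes a direct if tedious bookkeeping extension of the 4-fiber argument of Part~1, with connectivity of each large fiber still coming from Lemma~\ref{L:Connectivity} together with graph automorphism.
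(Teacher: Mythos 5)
Your proposal is correct. Part 1 is essentially identical to the paper's proof: the paper's function is $f=g\otimes\varphi_1+h\otimes\varphi_2$ with $h$ the associated function of $(C_1+e_t,C_2+e_t)$, which is exactly your $g(x)\varphi_1(y)+g(x+e_t)\varphi_2(y)$, and the four-fiber gluing via Lemmas~\ref{L:Translation} and~\ref{L:Connectivity} is the same. Part 2 is where you genuinely diverge. The paper sets $f'(x)=f(x+e_{3k+2})$ --- a translation in one of the two coordinates \emph{added} in Part 1 --- and takes $\widetilde{f}=f\otimes\varphi_1+f'\otimes\varphi_2$; this is recursive, so the connectivity of $S_{\pm}(\widetilde f)$ reduces to the already-established $\mathrm{SND}(f)=\mathrm{SND}(f')=2$ plus three gluing observations that reuse Lemma~\ref{L:Translation} verbatim, with no new lemma needed. (Algebraically the paper's $\widetilde f$ is the degenerate case of your four-term sum with $e_s=e_t$, i.e.\ $e_t+e_s=0$.) Your version, with $s$ and $t$ in distinct blocks, also works: the generalized translation statements $C_1\subset C_2+e_t+e_s$, $C_1+e_t+e_s\subset C_2$ and $C_2\cap(C_2+e_t+e_s)\neq\emptyset$ do hold by the block-sum analysis you sketch (the shift by $e_t+e_s$ sends $D_1$ to $\{(1,1,0),(0,0,1)\}\subset D_2$). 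But two remarks. First, that generalized lemma is not actually needed: in $H(3k,2)\,\square\,H(2,2)\,\square\,H(2,2)$ two adjacent fibers always carry translations differing by a \emph{single} $e_t$ or $e_s$ (one position-coordinate flip changes the support class of exactly one $\varphi$-factor), so the original Lemma~\ref{L:Translation}, applied once with $t$ and once with $s$, already supplies every containment and intersection your gluing uses; consequently the distinct-block hypothesis is not essential either. Second, the ``tedious bookkeeping'' conceals a real structural point absent from Part 1: the eight $C_2$-type fibers of $S_+(f)$ sit at positions $\{y_1+z_1=1\}$, whose large-to-large adjacencies (flips of $y_2$ or $z_2$) split them into \emph{two} connected clusters, and these clusters are joined only through the small $C_1$-type fibers, each of which attaches to one large fiber in each cluster. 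This must be checked explicitly, and it does check out, but it is exactly the kind of complication the paper's recursive construction sidesteps. In short: your route is valid and self-contained but heavier for Part 2; the paper's buys Part 2 almost for free from Part 1.
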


\begin{proof}
1. Let $(C_1,C_2)$ be an equitable $2$-partition of $H(3k,2)$ of type A. 
Let $C_3=C_1+e_t$ and $C_4=C_2+e_t$, where $t\in [3k]$ (we can take any $t$ from $[3k]$).
Let $g$ and $h$ be the functions associated with equitable $2$-partitions $(C_1,C_2)$ and $(C_3,C_4)$ respectively.
We define a function $f$ on $\mathbb{Z}_2^{3k+2}$ by $f=g\otimes \varphi_1+h\otimes \varphi_2$.
Using Corollary \ref{Corollary:Product}, Lemma \ref{L:Associated-Corollary} and Lemma~\ref{L:Small-Eigenfunctions}, we obtain that $g\otimes \varphi_1$ and $h\otimes \varphi_2$ are $\lambda_{2k+1}(3k+2,2)$-eigenfunctions of $H(3k+2,2)$.
Therefore, $f$ is also a $\lambda_{2k+1}(3k+2,2)$-eigenfunction of $H(3k+2,2)$.

Let us prove that the subgraph of $H(3k+2,2)$ induced by $S_{+}(f)$ is connected.
We will need the following five observations:

\begin{itemize}
  \item By Lemma \ref{L:Translation}, we have $C_1\subset C_4$.
Therefore, any vertex from $S_+(f_{0,0})$ has a neighbor in $S_{+}(f_{1,0})$.

  \item By Lemma \ref{L:Connectivity}, the subgraph of $H(3k,2)$ induced by $C_2$ is connected.
Since $C_4=C_2+e_t$, the subgraph of $H(3k,2)$ induced by $C_4$ is also connected.
Hence, the subgraph of $H(3k+2,2)$ induced by $S_{+}(f_{1,0})$ is connected.

  \item Lemma \ref{L:Translation} implies that $C_2\cap C_4\neq \emptyset$. 
Consequently, there is an edge between $S_{+}(f_{1,0})$ and $S_{+}(f_{1,1})$ in $H(3k+2,2)$.

  \item  By Lemma \ref{L:Connectivity}, the subgraph of $H(3k,2)$ induced by $C_2$ is connected.
Therefore, the subgraph of $H(3k+2,2)$ induced by $S_{+}(f_{1,1})$ is connected.

  \item We have $C_3\subset C_2$ due to Lemma \ref{L:Translation}.
Consequently, any vertex from $S_+(f_{0,1})$ has a neighbor in $S_{+}(f_{1,1})$.
\end{itemize}

Combining these observations, we obtain that the subgraph of $H(3k+2,2)$ induced by $S_{+}(f)$ is connected.
Similarly, we can prove that the subgraph of $H(3k+2,2)$ induced by $S_{-}(f)$ is connected. Namely, it is enough to change $S_{+}(f_{a,b})$ to $S_{-}(f_{1-a,1-b})$
in all places in the five observations above. Therefore, we have $\mathrm{SND}(f)=2$.
Thus, we have proved the statement for $n=3k+2$. The proof for $n>3k+2$ follows from Corollary \ref{Cor:N-E}.

2. Let $f$ be the function defined above. 
We define a function $f'$ on $\mathbb{Z}_2^{3k+2}$ by $f'(x)=f(x+e_{3k+2})$.
Since $f$ is a $\lambda_{2k+1}(3k+2,2)$-eigenfunction of $H(3k+2,2)$, $f'$ is also a $\lambda_{2k+1}(3k+2,2)$-eigenfunction of $H(3k+2,2)$.
We define a function $\widetilde{f}$ on $\mathbb{Z}_2^{3k+4}$ by $\widetilde{f}=f\otimes \varphi_1+f'\otimes \varphi_2$.
Using Corollary~\ref{Corollary:Product} and Lemma~\ref{L:Small-Eigenfunctions}, 
we obtain that $\widetilde{f}$ is a $\lambda_{2k+2}(3k+4,2)$-eigenfunction of $H(3k+4,2)$.

Let us prove that the subgraph of $H(3k+4,2)$ induced by $S_{+}(\widetilde{f})$ is connected.
As we proved above, we have $\mathrm{SND}(f)=2$. Therefore,

\begin{equation}\label{Eq:2}
\mathrm{SND}(\widetilde{f}_{0,0})=\mathrm{SND}(\widetilde{f}_{1,1})=2
\end{equation}

Since $f'(x)=f(x+e_{3k+2})$, we have $\mathrm{SND}(f')=2$. Hence,

\begin{equation}\label{Eq:3}
\mathrm{SND}(\widetilde{f}_{1,0})=\mathrm{SND}(\widetilde{f}_{0,1})=2
\end{equation}
We will need the following three observations:

\begin{itemize}
  
  \item  By Lemma \ref{L:Translation}, we have $C_1\subset C_4$.
Therefore, any vertex from $S_+(\widetilde{f}_{0,0,0,0})$ has a neighbor in $S_{+}(\widetilde{f}_{0,0,1,0})$.
  Hence, there are edges between $S_{+}(\widetilde{f}_{0,0})$ and $S_{+}(\widetilde{f}_{1,0})$ in $H(3k+4,2)$.

  \item Lemma \ref{L:Translation} implies that $C_2\cap C_4\neq \emptyset$. 
Consequently, there is an edge between $S_{+}(\widetilde{f}_{0,0,1,0})$ and $S_{+}(\widetilde{f}_{0,0,1,1})$ in $H(3k+4,2)$. 
 Hence, there is an edge between $S_{+}(\widetilde{f}_{1,0})$ and $S_{+}(\widetilde{f}_{1,1})$ in $H(3k+4,2)$.

  \item We have $C_3\subset C_2$ due to Lemma \ref{L:Translation}.
Therefore, any vertex from $S_+(\widetilde{f}_{0,0,0,1})$ has a neighbor in $S_{+}(\widetilde{f}_{0,0,1,1})$. 
 Hence, there are edges between $S_{+}(\widetilde{f}_{0,1})$ and $S_{+}(\widetilde{f}_{1,1})$ in $H(3k+4,2)$.  
  
\end{itemize}

Combining these observations with (\ref{Eq:2}) and (\ref{Eq:3}), we obtain that the subgraph of $H(3k+4,2)$ induced by $S_{+}(\widetilde{f})$ is connected.
Similarly, we can prove that the subgraph of $H(3k+4,2)$ induced by $S_{-}(\widetilde{f})$ is connected. Namely, it is enough to change $S_{+}(\widetilde{f}_{a,b})$ and $S_{+}(\widetilde{f}_{0,0,a,b})$ 
to $S_{-}(\widetilde{f}_{1-a,1-b})$ and $S_{-}(\widetilde{f}_{0,0,1-a,1-b})$ 
in all places in the three observations above. Therefore, we have $\mathrm{SND}(\widetilde{f})=2$.
Thus, we have proved the statement for $n=3k+4$. The proof for $n>3k+4$ follows from Corollary \ref{Cor:N-E}.
\end{proof}

\section{Case $q=3$}\label{Sec:q=3}

In this section, we consider Problem \ref{Problem:3} for $q=3$. The main result of this section is the following.

\begin{theorem}\label{Th:3}
For every $n\geq 2$ and every $i\in [n-1]$, 
there exists a $\lambda_{i}(n,3)$-eigenfunction $f$ of $H(n,3)$ such that $\mathrm{SND}(f)=2$.
\end{theorem}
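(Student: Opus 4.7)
The plan is to split into three cases: $i=1$, $i=2$, and $i\geq 3$.

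For $i=1$ I would invoke Lemma~\ref{L:U_1} directly. For $i=2$, Lemma~\ref{L:U_2(3,3)} provides the case $n=3$ and Corollary~\ref{Cor:N-E} extends it to all $n\geq 3$.

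For $i\geq 3$, by Corollary~\ref{Cor:N-E} it suffices to exhibit the desired eigenfunction at the minimum dimension $n=i+1$. I would set $m=i-2\geq 1$ and take the type B partition $(C_1,C_2)=(\Gamma_0^{m,3},\Gamma_1^{m,3}\cup\Gamma_2^{m,3})$ of $H(m,3)$ together with its two translates $(C_3,C_4)=(C_1+e_t,C_2+e_t)$ and $(C_5,C_6)=(C_1+2e_t,C_2+2e_t)$ for any fixed $t\in[m]$; these are independent of $t$ and equal $(\Gamma_1^{m,3},\Gamma_0^{m,3}\cup\Gamma_2^{m,3})$ and $(\Gamma_2^{m,3},\Gamma_0^{m,3}\cup\Gamma_1^{m,3})$ respectively. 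Let $g_1,g_2,g_3$ be their associated functions; each is a $\lambda_m(m,3)$-eigenfunction by (translation-invariance and) Lemma~\ref{L:Associated-Corollary}, taking the value $2m$ on $\Gamma_{j-1}^{m,3}$ and $-m$ on the union of the other two. I then define
$$F(x,y)=g_1(x)\psi_1(y)+g_2(x)\psi_2(y)+g_3(x)\psi_3(y)$$
on $H(m+3,3)=H(i+1,3)$. Corollary~\ref{Corollary:Product} together with Lemma~\ref{L:Small-Eigenfunctions} then show that $F$ is a $\lambda_{m+2}(m+3,3)=\lambda_i(i+1,3)$-eigenfunction.

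The heart of the proof is verifying $\mathrm{SND}(F)=2$. Since $\psi_1,\psi_2,\psi_3$ have pairwise disjoint supports (lying in $\Gamma_0^{3,3},\Gamma_1^{3,3},\Gamma_2^{3,3}$ respectively), for each $y\in\mathbb{Z}_3^3$ the slice $F_y$ equals $\psi_j(y)\cdot g_j$ for the unique $j$ (if any) with $\psi_j(y)\neq 0$. This splits the 18 nonzero slices into nine \emph{positive-type} slices, where some $\psi_j(y)=+1$ and $S_+(F_y)=\Gamma_{j-1}^{m,3}$ is a single hyperplane (independent in $H(m,3)$), and nine \emph{negative-type} slices, where some $\psi_j(y)=-1$ and $S_+(F_y)$ is a union of two $\Gamma^{m,3}$'s, hence connected in $H(m,3)$ by Lemma~\ref{L:Hyperplane}. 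I would then enumerate the slice-adjacency graph inherited from $H(3,3)$ and verify two facts: (a) the nine negative-type slices form a single 9-cycle in which each consecutive pair of slices shares a common $\Gamma_a^{m,3}$ in their $S_+$-sets, so vertical edges (over any $x$ in the shared $\Gamma_a^{m,3}$) merge all nine negative-type contributions into one connected piece of $S_+(F)$; and (b) every positive-type slice $y$ with $S_+(F_y)=\Gamma_a^{m,3}$ has at least one neighboring negative-type slice $y'$ in $H(3,3)$ with $\Gamma_a^{m,3}\subset S_+(F_{y'})$, so each vertex $(x,y)$ with $x\in\Gamma_a^{m,3}$ attaches via a vertical edge to the main component. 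Hence $S_+(F)$ is connected, and the argument for $S_-(F)$ is symmetric (the roles of positive- and negative-type slices are exchanged).

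The main obstacle is the combinatorial bookkeeping needed for the slice-gluing in (a) and (b). However, because this check depends only on the combinatorics of the nine supports of $\psi_j(y)=\pm 1$ in $H(3,3)$ and the inclusions between the sets $\Gamma_a^{m,3}$, it reduces to a finite enumeration that is entirely independent of $m$; all intersections behave uniformly thanks to Lemma~\ref{L:Hyperplane}, so the same check works verbatim for every $i\geq 3$.
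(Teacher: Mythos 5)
Your proposal is correct and follows essentially the same route as the paper: reduce to the minimal dimension $n=i+1$ via Corollary~\ref{Cor:N-E}, handle $i=1$ and $i=2$ by Lemmas~\ref{L:U_1} and~\ref{L:U_2(3,3)}, and for $i\geq 3$ build the eigenfunction as a sum of tensor products of associated functions of the type B partition (and its translates) with $\psi_1,\psi_2,\psi_3$, then verify connectivity of $S_{+}$ and $S_{-}$ by gluing slices over $H(3,3)$ along shared hyperplanes $\Gamma_a^{m,3}$. The only cosmetic differences are that the paper takes $f=g\otimes\psi_1+h\otimes\psi_2+h\otimes\psi_3$ with the single translate $h$ serving both $\psi_2$ and $\psi_3$, whereas you attach the $2e_t$-translate to $\psi_3$, and the paper organizes the connectivity check by the value of the last coordinate rather than by your $9$-cycle of negative-type slices; both checks are finite, independent of $m$, and correct.
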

\begin{proof}
Let $(C_1,C_2)$ be an equitable $2$-partition of $H(n,3)$ of type B, where $n\geq 1$. 
Let $C_3=C_1+e_t$ and $C_4=C_2+e_t$, where $t\in [n]$ (we can take any $t$ from $[n]$).
Let $g$ and $h$ be the functions associated with equitable $2$-partitions $(C_1,C_2)$ and $(C_3,C_4)$.
We define a function $f$ on $\mathbb{Z}_3^{n+3}$ by $f=g\otimes \psi_1+h\otimes \psi_2+h\otimes \psi_3$.
Using Corollary~\ref{Corollary:Product}, Lemma \ref{L:Associated-Corollary} and Lemma \ref{L:Small-Eigenfunctions}, we obtain that $g\otimes \psi_1$, $h\otimes \psi_2$ and $h\otimes \psi_3$ are 
$\lambda_{n+2}(n+3,3)$-eigenfunctions of $H(n+3,3)$.
Therefore, $f$ is also a $\lambda_{n+2}(n+3,3)$-eigenfunction of $H(n+3,3)$.
Let us prove that $\mathrm{SND}(f)=2$.

Firstly, let us prove that the subgraph of $H(n+3,3)$ induced by $S_{+}(f_0)$ is connected.
We will need the following seven observations:

\begin{itemize}
  
  \item  By Lemma \ref{L:Translation-3}, we have $C_1\subset C_4$.
Therefore, any vertex from $S_+(f_{0,0,0})$ has a neighbor in $S_{+}(f_{2,0,0})$.
  
  \item  By Lemma \ref{L:Connectivity-3}, the subgraph of $H(n,3)$ induced by $C_2$ is connected.
Since $C_4=C_2+e_t$, the subgraph of $H(n,3)$ induced by $C_4$ is also connected.
Hence, the subgraph of $H(n+3,3)$ induced by $S_{+}(f_{2,0,0})$ is connected.
  
  \item  Note that any vertex from $S_{+}(f_{2,0,0})$ has a neighbor in $S_{+}(f_{2,2,0})$.
  
  \item Lemma \ref{L:Translation-3} implies that $C_2\cap C_4\neq \emptyset$. 
Consequently, there is an edge between $S_{+}(f_{2,2,0})$ and $S_{+}(f_{1,2,0})$ in $H(n+3,3)$. 
  
  \item By Lemma \ref{L:Connectivity-3}, the subgraph of $H(n,3)$ induced by $C_2$ is connected.
Therefore, the subgraph of $H(n+3,3)$ induced by $S_{+}(f_{1,2,0})$ is connected.
  
  \item We have $C_3\subset C_2$ due to Lemma \ref{L:Translation-3}.
Consequently, any vertex from $S_+(f_{1,1,0})$ has a neighbor in $S_{+}(f_{1,2,0})$.

\item  Note that any vertex from $S_{+}(f_{1,0,0})$ has a neighbor in $S_{+}(f_{1,1,0})$.

\end{itemize}

Combining these observations, we obtain that the subgraph of $H(n+3,3)$ induced by $S_{+}(f_0)$ is connected.

Secondly, let us prove that the subgraph of $H(n+3,3)$ induced by $S_{+}(f_1)$ is connected.
We will need the following seven observations:

\begin{itemize}
  
  \item  By Lemma \ref{L:Translation-3}, we have $C_1\subset C_4$.
Therefore, any vertex from $S_+(f_{1,1,1})$ has a neighbor in $S_{+}(f_{0,1,1})$.
  
  \item  By Lemma \ref{L:Connectivity-3}, the subgraph of $H(n,3)$ induced by $C_2$ is connected.
Since $C_4=C_2+e_t$, the subgraph of $H(n,3)$ induced by $C_4$ is also connected.
Hence, the subgraph of $H(n+3,3)$ induced by $S_{+}(f_{0,1,1})$ is connected.
  
  \item  Note that any vertex from $S_{+}(f_{0,1,1})$ has a neighbor in $S_{+}(f_{0,0,1})$.
  
  \item Lemma \ref{L:Translation-3} implies that $C_2\cap C_4\neq \emptyset$. 
Consequently, there is an edge between $S_{+}(f_{0,0,1})$ and $S_{+}(f_{2,0,1})$ in $H(n+3,3)$. 
  
  \item By Lemma \ref{L:Connectivity-3}, the subgraph of $H(n,3)$ induced by $C_2$ is connected.
Therefore, the subgraph of $H(n+3,3)$ induced by $S_{+}(f_{2,0,1})$ is connected.
  
  \item We have $C_3\subset C_2$ due to Lemma \ref{L:Translation-3}.
Consequently, any vertex from $S_+(f_{2,1,1})$ has a neighbor in $S_{+}(f_{2,0,1})$.

\item  Note that any vertex from $S_{+}(f_{2,2,1})$ has a neighbor in $S_{+}(f_{2,1,1})$.

\end{itemize}

Combining these observations, we obtain that the subgraph of $H(n+3,3)$ induced by $S_{+}(f_1)$ is connected.

Thirdly, let us prove that the subgraph of $H(n+3,3)$ induced by $S_{+}(f_2)$ is connected.
We will need the following six observations:

\begin{itemize}
  
  \item  By Lemma \ref{L:Translation-3}, we have $C_1\subset C_4$.
Therefore, any vertex from $S_+(f_{2,2,2})$ has a neighbor in $S_{+}(f_{1,2,2})$.
  
  \item  By Lemma \ref{L:Connectivity-3}, the subgraph of $H(n,3)$ induced by $C_2$ is connected.
Since $C_4=C_2+e_t$, the subgraph of $H(n,3)$ induced by $C_4$ is also connected.
Hence, the subgraph of $H(n+3,3)$ induced by $S_{+}(f_{1,2,2})$ is connected.
  
  \item  Note that any vertex from $S_{+}(f_{1,1,2})$ has a neighbor in $S_{+}(f_{1,2,2})$.
  
  \item Lemma \ref{L:Translation-3} implies that $C_2\cap C_4\neq \emptyset$. 
Consequently, there is an edge between $S_{+}(f_{1,1,2})$ and $S_{+}(f_{0,1,2})$ in $H(n+3,3)$. 
  
  \item By Lemma \ref{L:Connectivity-3}, the subgraph of $H(n,3)$ induced by $C_2$ is connected.
Therefore, the subgraph of $H(n+3,3)$ induced by $S_{+}(f_{0,1,2})$ is connected.
  
  \item We have $C_3\subset C_2$ due to Lemma \ref{L:Translation-3}.
Consequently, any vertex from $S_+(f_{0,2,2})$ has a neighbor in $S_{+}(f_{0,1,2})$.
The same argument implies that any vertex from $S_+(f_{0,0,2})$ has a neighbor in $S_{+}(f_{0,1,2})$.

\end{itemize}

Combining these observations, we obtain that the subgraph of $H(n+3,3)$ induced by $S_{+}(f_2)$ is connected.

As we proved above, the subgraphs of $H(n+3,3)$ induced by $S_{+}(f_0)$, $S_{+}(f_1)$ and  $S_{+}(f_2)$ are connected.
On the other hand, by Lemma \ref{L:Translation-3}, we have $C_2\cap C_4\neq \emptyset$. 
Therefore, there is an edge between $S_{+}(f_{2,0,0})$ and $S_{+}(f_{2,0,1})$ in $H(n+3,3)$.
The same argument shows that there is an edge between $S_{+}(f_{0,1,1})$ and $S_{+}(f_{0,1,2})$ in $H(n+3,3)$.
This implies that the subgraph of $H(n+3,3)$ induced by $S_{+}(f)$ is connected.
Similarly, we can prove that the subgraph of $H(n+3,3)$ induced by $S_{-}(f)$ is connected.
Therefore, we have $\mathrm{SND}(f)=2$.
Thus, we have proved the theorem for $i=n-1$ and $n\geq 4$. 
The proof for $3\leq i<n-1$ and $n\geq 5$ follows from Corollary \ref{Cor:N-E}.

Finally, we consider the case $n\leq 3$. 
The proof for $i=1$ and $n\in \{2,3\}$ follows from Lemma \ref{L:U_1}, 
and the proof for $i=2$ and $n=3$ follows from Lemma~\ref{L:U_2(3,3)}.
\end{proof}

\section{Case $q\geq 4$}\label{Sec:q>=4}

In this section, we consider Problem \ref{Problem:3} for $q\geq 4$. The main result of this section is the following.

\begin{theorem}\label{Th:>3}
Let $n\geq 1$ and $q\geq 4$.
Then for every $i\in [n]$ there exists a $\lambda_{i}(n,q)$-eigenfunction $f$ of $H(n,q)$ such that $\mathrm{SND}(f)=2$.
\end{theorem}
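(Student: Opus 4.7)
The plan is to give a uniform, direct construction for all $q \geq 4$, all $n \geq 1$, and all $i \in [n]$, avoiding the case analysis needed for $q \in \{2,3\}$. The key observation is that when $q \geq 4$, the ring $\mathbb{Z}_q$ contains two disjoint pairs of consecutive residues $\{0,1\}$ and $\{2,3\}$, and Lemma \ref{L:Hyperplane} will then give connected preimages under the coordinate-sum map. I would fix the base function $g : \mathbb{Z}_q \to \mathbb{R}$ by $g(0) = g(1) = 1$, $g(2) = g(3) = -1$, and $g(a) = 0$ for $a \geq 4$, and note that $\sum_{a \in \mathbb{Z}_q} g(a) = 0$.

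For each $i \in [n]$ I would then set $h^{(i)} : \mathbb{Z}_q^i \to \mathbb{R}$ to be $h^{(i)}(y) = g(y_1 + \cdots + y_i)$, where the sum is in $\mathbb{Z}_q$. The first task is to check that $h^{(i)}$ is a $\lambda_i(i,q)$-eigenfunction of $H(i,q)$. A short direct calculation suffices: for $y \in \mathbb{Z}_q^i$ with $s = y_1 + \cdots + y_i$, the neighbors of $y$ correspond to a choice of a coordinate $j \in [i]$ and a nonzero shift $b \in \mathbb{Z}_q\setminus\{0\}$, giving
\[
\sum_{z \in N(y)}\bigl(h^{(i)}(y) - h^{(i)}(z)\bigr) = i\left[(q-1)g(s) - \sum_{b \neq 0} g(s+b)\right] = iq\cdot g(s),
\]
where the last step uses $\sum_a g(a)=0$. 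Thus the equation (\ref{Eq:Eigenfunction}) holds with $\lambda = iq = \lambda_i(i,q)$. Equivalently, one may recognize $h^{(i)}$ as lifted from an eigenfunction of the equitable $q$-partition $(\Gamma_0^{i,q},\ldots,\Gamma_{q-1}^{i,q})$ of $H(i,q)$.

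Next I would verify that $\mathrm{SND}(h^{(i)}) = 2$. From the definition of $g$, $S_+(h^{(i)}) = \Gamma_0^{i,q} \cup \Gamma_1^{i,q}$ and $S_-(h^{(i)}) = \Gamma_2^{i,q} \cup \Gamma_3^{i,q}$; applying Lemma \ref{L:Hyperplane} with $a=0$ and with $a=2$ respectively shows that both induced subgraphs are connected, so $\mathrm{SND}(h^{(i)}) = 2$. Finally, Corollary \ref{Cor:N-E} immediately extends this from $H(i,q)$ to $H(n,q)$ for every $n \geq i$, handling all $i \in [n]$ at once and completing the argument.

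I do not foresee any substantial obstacle: the hypothesis $q \geq 4$ is used exactly (and only) to ensure that the pairs $\{0,1\}$ and $\{2,3\}$ are disjoint in $\mathbb{Z}_q$, so that a single choice of $g$ together with a single invocation of Lemma \ref{L:Hyperplane} uniformly covers every eigenvalue $\lambda_i(n,q)$ with $i \in [n]$; in particular, no auxiliary small-case eigenfunctions analogous to $\psi_1,\psi_2,\psi_3$ are needed.
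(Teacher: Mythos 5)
Your construction is correct, and it follows the same skeleton as the paper's proof: both arguments place the positive part on $\Gamma_0^{i,q}\cup\Gamma_1^{i,q}$, use Lemma \ref{L:Hyperplane} for connectivity of the nodal domains, and invoke Corollary \ref{Cor:N-E} to pass from $H(i,q)$ to $H(n,q)$. The only real difference is the choice of eigenfunction. The paper takes the associated function of the equitable $2$-partition $\bigl(\Gamma_0^{n,q}\cup\Gamma_1^{n,q},\ \Gamma_2^{n,q}\cup\cdots\cup\Gamma_{q-1}^{n,q}\bigr)$, whose quotient matrix is $\begin{pmatrix} n & n(q-2)\\ 2n & n(q-3)\end{pmatrix}$; this makes the eigenfunction property free via Lemma \ref{L:Associated-Function}, but the function is nonzero everywhere, so connectivity of $S_-(f)=\Gamma_2^{n,q}\cup\cdots\cup\Gamma_{q-1}^{n,q}$ requires chaining Lemma \ref{L:Hyperplane} across all consecutive pairs. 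Your function instead vanishes on $\Gamma_4^{i,q}\cup\cdots\cup\Gamma_{q-1}^{i,q}$, which costs you a direct verification of the eigenvalue equation (your computation, resting on $\sum_{a\in\mathbb{Z}_q}g(a)=0$, is correct) but buys a smaller negative support $\Gamma_2^{i,q}\cup\Gamma_3^{i,q}$ handled by a single application of Lemma \ref{L:Hyperplane}. Both routes use $q\geq 4$ in exactly the same way, namely to keep $\{0,1\}$ and $\{2,3\}$ (respectively $\{0,1\}$ and $\{2,\dots,q-1\}$) disjoint in $\mathbb{Z}_q$; neither requires the auxiliary small-case eigenfunctions of Section \ref{Sec:Small-Eigenfunctions}.
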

\begin{proof}


Let us consider a $2$-partition $(C_1,C_2)$ of $\mathbb{Z}_q^n$, where $C_1=\Gamma_0^{n,q}\cup \Gamma_1^{n,q}$ and
$C_2=\Gamma_2^{n,q}\cup\Gamma_3^{n,q}\cup\ldots \cup\Gamma_{q-1}^{n,q}$.
One can verify that $(C_1,C_2)$ is an equitable $2$-partition of $H(n,q)$ 
with the quotient matrix 
$\begin{pmatrix}
n & n(q-2)\\
2n & n(q-3)\\
\end{pmatrix}$.
Let $f$ be the associated function of $(C_1,C_2)$.
By Lemma \ref{L:Associated-Function}, $f$ is a $\lambda_{n}(n,q)$-eigenfunction of $H(n,q)$.

Let us prove that $\mathrm{SND}(f)=2$ for all $q\geq 4$.
Note that $S_{+}(f)=\Gamma_0^{n,q}\cup \Gamma_1^{n,q}$.
Hence, by Lemma \ref{L:Hyperplane}, the subgraph of $H(n,q)$ induced by $S_{+}(f)$ is connected.
We also have $S_{-}(f)=\Gamma_2^{n,q}\cup\Gamma_3^{n,q}\cup\ldots \cup\Gamma_{q-1}^{n,q}$.
By Lemma \ref{L:Hyperplane}, the subgraph of $H(n,q)$ induced by $\Gamma_{a}^{n,q}\cup \Gamma_{a+1}^{n,q}$ is connected for all $a\in \mathbb{Z}_q$.
This implies that the subgraph of $H(n,q)$ induced by $S_{-}(f)$ is also connected.
Therefore, we have $\mathrm{SND}(f)=2$.
Thus, we have proved the theorem for $i=n$. The proof for $i<n$ follows from Corollary \ref{Cor:N-E}.
\end{proof}

\section{Concluding remarks}\label{Sec:Conclusion}
In this paper, we consider a problem of finding eigenfunctions of Hamming graphs with the smallest number of strong nodal domains, 
continuing the research from \cite{BHLPS04}.

As the main result we confirm Conjecture \ref{Conj:Strong-H} for all $i$ up to approximately $\frac{2n}{3}$.
This upper bound is closely related to the correlation immunity bound for Boolean functions proved by Fon-Der-Flaass in \cite{FDF07Bound}, 
since our approach is to build required eigenfunctions from known equitable $2$-partitions attaining the bound. 
Thus, it seems that for the remaining cases a fundamentally new approach is needed.

Besides, we consider the problem for $q\geq 3$ for the first time. We prove analogs of Conjecture \ref{Conj:Strong-H} in the following cases: 
\begin{itemize}
	\item $q\geq 4$ and $1\leq i\leq n$,
	\item $q=3$ and $1\leq i\leq n-1$.
\end{itemize}

For $q=3$ and $q=2$ we use similar approaches. 
The case $q\geq 4$ is simpler because in this case there are $\lambda_{n}(n,q)$-eigenfunctions of $H(n,q)$
with two strong nodal domains.

In other words, the last remaining open case for $q\geq 3$ is $q=3$ and $i=n$.
In our opinion, this case has something in common with the case $q=2$, $i=n-1$, 
and the minimum number of strong nodal domains seems to be a linear on $n$ function.
In particular, we conducted numerical experiments for $n=2,3,4$, $i=n$ and the functions with minimum number of strong nodal domains that we found have $3$, $4$ and $5$ strong nodal domains respectively (see Figure \ref{fig:concl}). Based on these computations, we formulate the following conjecture.

\begin{conjecture}
For any eigenfunction $f$ of $H(n,3)$, $n\geq 1$, with eigenvalue $3n$ we have $\mathrm{SND}(f)\geq n+1$.
\end{conjecture}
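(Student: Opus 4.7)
The plan is to proceed by induction on $n$. The base case $n=1$ is immediate: any $\lambda_1(1,3)$-eigenfunction of $K_3$ is a nonzero $f:\mathbb{Z}_3\to\mathbb{R}$ with $f(0)+f(1)+f(2)=0$, hence takes both a strictly positive and a strictly negative value, giving $\mathrm{SND}(f)\geq 2=n+1$.

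For the inductive step, decompose $f$ along the last coordinate into slices $f_0,f_1,f_2$ on $H(n-1,3)$. The top eigenspace of $H(n,3)$ is the tensor product of the non-trivial eigenspaces of the factor $K_3$'s, so writing $f=\sum_j g_j\otimes h_j$ in such a decomposition yields $f_a=\sum_j h_j(a)g_j$: each nonzero slice $f_a$ is a $\lambda_{n-1}(n-1,3)$-eigenfunction of $H(n-1,3)$ (with $\mathrm{SND}(f_a)\geq n$ by induction), and $f_0+f_1+f_2\equiv 0$ on $\mathbb{Z}_3^{n-1}$. If one slice vanishes, say $f_2\equiv 0$, then $f_1=-f_0$, and the positive region of $f$ splits as $S_+(f_0)\times\{0\}\sqcup S_-(f_0)\times\{1\}$ with no edges between the two pieces: in any column $x'$ the slice-$0$ and slice-$1$ values $f_0(x')$ and $-f_0(x')$ cannot both be positive, and slice $2$ contributes nothing. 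Hence the number of positive components equals the total number of components of $S_+(f_0)\cup S_-(f_0)$ in $H(n-1,3)$, which is $\mathrm{SND}(f_0)\geq n$; the negative side is symmetric, and $\mathrm{SND}(f)\geq 2n\geq n+1$.

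The hard case is when all three slices are nonzero. Positive components from different slices may merge through the ``vertical'' edges of the $K_3$'s at each column $x'\in\mathbb{Z}_3^{n-1}$: $(x',a)$ and $(x',a')$ lie in a common nodal domain exactly when $f_a(x')$ and $f_{a'}(x')$ share a strict sign, and the identity $f_0(x')+f_1(x')+f_2(x')=0$ forces at most two of the three slice-values to share a sign in any given column. To analyse these mergers systematically, I would parameterize the triple by two $\lambda_{n-1}(n-1,3)$-eigenfunctions $g,h$ of $H(n-1,3)$ via $f_0=g$, $f_1=-\tfrac{1}{2}g+h$, $f_2=-\tfrac{1}{2}g-h$, and study the joint sign pattern of $(g,h)$ on $\mathbb{Z}_3^{n-1}$. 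The subcase $h\equiv 0$ again yields $\mathrm{SND}(f)\geq 2n$ by an argument parallel to the vanishing-slice case: the positive region becomes $S_+(g)\times\{0\}\cup S_-(g)\times\{1,2\}$, whose component count is $\mathrm{SND}(g)$ since the ``$\{1,2\}$''-block is a Cartesian product with $K_2$ that preserves component counts.

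The principal obstacle is the generic case, in which $g$ and $h$ are linearly independent: one must extract a single additional nodal component beyond the $n$ guaranteed by the inductive hypothesis applied to any one slice, a ``discrete Courant gain'' that does not appear accessible from the equitable-partition techniques used in the present paper. As the authors themselves remark, the structural analogy with the $q=2$, $i=n-1$ setting of Theorem~\ref{Th:Strong-N-1} suggests that adapting the argument of~\cite{BHLPS04} for that result, generalised from the two-sign pattern of $\mathbb{Z}_2$ to the three-sign pattern of $\mathbb{Z}_3$, is the most promising line of attack, though I expect that genuinely new ingredients will be required to carry it through.
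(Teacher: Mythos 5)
This statement is not a theorem of the paper: it is the conjecture stated in the concluding remarks, supported only by the authors' numerical experiments for $n=2,3,4$, and the paper offers no proof of it. There is therefore no proof to compare yours against; the only question is whether your proposal settles the conjecture, and by your own admission it does not.

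Your partial reductions are correct as far as they go. The slice decomposition is sound: the $3n$-eigenspace of $H(n,3)$ is the tensor product of the nontrivial eigenspaces of the $K_3$ factors, so each slice $f_a$ is either zero or a $\lambda_{n-1}(n-1,3)$-eigenfunction of $H(n-1,3)$, the three slices sum to zero pointwise, and at most one slice can vanish (two vanishing slices would force $f\equiv 0$). The two degenerate cases you treat ($f_2\equiv 0$, and $h\equiv 0$ in your parameterization) are handled correctly and even give the stronger bound $\mathrm{SND}(f)\geq 2n$. But the gap in the generic case is more serious than ``extracting one additional component beyond the $n$ supplied by one slice.'' The inductive hypothesis gives you at least $n$ strong nodal domains \emph{within each slice}, but two distinct components of, say, $S_+(f_0)$ may be joined in $H(n,3)$ by a path that passes through slices $1$ and $2$; nothing in your setup bounds the number of such vertical mergers, so a priori the roughly $3n$ slice-level components could collapse to as few as $2$. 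Consequently the induction as framed does not yield $\mathrm{SND}(f)\geq n$, let alone $n+1$ — controlling those mergers is precisely the combinatorial content of the conjecture, and it is the same difficulty that makes the $q=2$, $i=n-1$ bound of Theorem~\ref{Th:Strong-N-1} nontrivial in \cite{BHLPS04}. Your proposal is a reasonable reduction that correctly isolates where the difficulty lies, but it is not a proof, and the conjecture remains open.
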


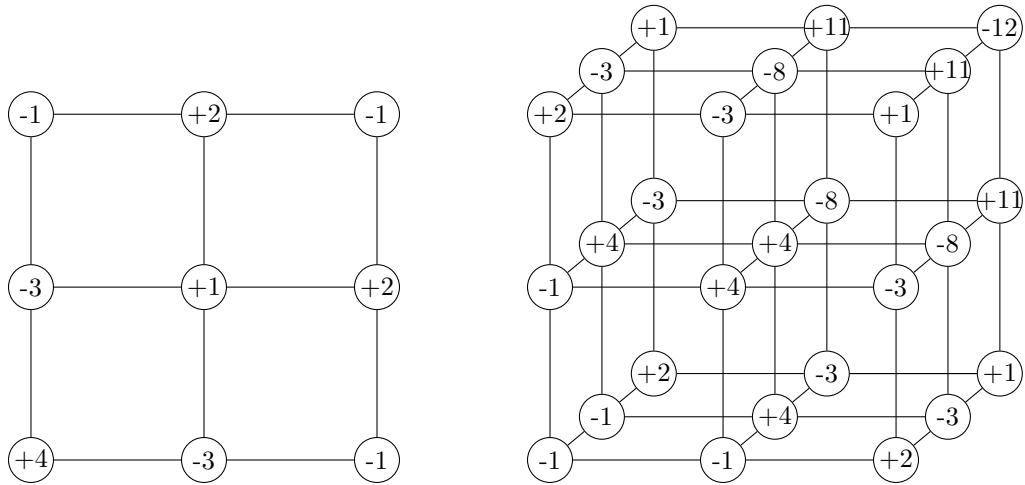
\begin{figure}
	\begin{center}
		\begin{minipage}{1.45\linewidth}
			
			\begin{tikzpicture}[scale=2.3,
				mycirc/.style={circle, minimum size=0.2cm}]
				\tikzstyle{every node}=[draw,circle,fill=white,minimum size=0pt,
				inner sep=0pt]
				\usetikzlibrary{math}
				

				\node[mycirc,shape=circle,draw=black,text width=0.6cm, label=center:{ +4}] (0009) at (0,0) {};
				\node[mycirc,shape=circle,draw=black,text width=0.6cm,label=center:{ -3}] (1009) at (1,0) {};
				\node[mycirc,shape=circle,draw=black,text width=0.6cm, label=center:{ -3}] (0109) at (0,1) {};
				\node[mycirc,shape=circle,draw=black,text width=0.6cm, label=center:{ +1}] (1109) at (1,1) {};
				\node[mycirc,shape=circle,draw=black,text width=0.6cm,label=center:{ -1}] (2009) at (1+1,0) {};
				\node[mycirc,shape=circle,draw=black,text width=0.6cm,label=center:{ +2} ] (2109) at (1+1,1) {};
				
				\node[mycirc,shape=circle,draw=black,text width=0.6cm,label=center:{ -1} ] (0209) at (0,1+1) {};
				\node[mycirc,shape=circle,draw=black,text width=0.6cm, label=center:{ +2}] (1209) at (1,1+1) {};
				\node[mycirc,shape=circle,draw=black,text width=0.6cm, label=center:{ -1}] (2209) at (2,2) {};

				\path[draw=black]  (0009) edge (1009);
				\path[draw=black]  (0009) edge (0109);				
				\path[draw=black]  (1009) edge (2009);
				\path[draw=black]  (1009) edge (1109);					
				\path[draw=black]  (0109) edge (1109);		
				\path[draw=black]  (0109) edge (0209);				
				\path[draw=black]  (1109) edge (2109);
				\path[draw=black]  (1109) edge (1209);		
				\path[draw=black]  (2009) edge (2109);
				\path[draw=black]  (0209) edge (1209);
			
				\path[draw=black]  (2109) edge (2209);		
				\path[draw=black]  (1209) edge (2209);

				\node[mycirc,shape=circle,draw=black,text width=0.6cm, label=center:{ -1}] (000) at (3+0,0) {};
				\node[mycirc,shape=circle,draw=black,text width=0.6cm,label=center:{-1 }] (100) at (3+1,0) {};
				\node[mycirc,shape=circle,draw=black,text width=0.6cm, label=center:{-1 }] (010) at (3+0,1) {};
				\node[mycirc,shape=circle,draw=black,text width=0.6cm, label=center:{+4 }] (110) at (3+1,1) {};

				\node[mycirc,shape=circle,draw=black,text width=0.6cm,label=center:{-1 } ] (001) at (3+0+0.3,0+0.25) {};
				
				\node[mycirc,shape=circle,draw=black,text width=0.6cm, label=center:{+4 }] (101) at (3+1+0.3,0+0.25) {};
				\node[mycirc,shape=circle,draw=black,text width=0.6cm, label=center:{+4 }] (011) at (3+0+0.3,1+0.25) {};
				\node[mycirc,shape=circle,draw=black,text width=0.6cm,label=center:{+4 } ] (111) at (3+1+0.3,1+0.25) {};

				\node[mycirc,shape=circle,draw=black,text width=0.6cm,label=center:{+2 }] (200) at (3+1+1,0) {};
				\node[mycirc,shape=circle,draw=black,text width=0.6cm,label=center:{ -3} ] (210) at (3+1+1,1) {};
				\node[mycirc,shape=circle,draw=black,text width=0.6cm, label=center:{-3 }] (201) at (3+1+0.3+1,0+0.25) {};
				\node[mycirc,shape=circle,draw=black,text width=0.6cm, label=center:{-8 }] (211) at (3+1+0.3+1,1+0.25) {};

				\node[mycirc,shape=circle,draw=black,text width=0.6cm,label=center:{+2} ] (020) at (3+0,1+1) {};
				\node[mycirc,shape=circle,draw=black,text width=0.6cm, label=center:{ -3}] (120) at (3+1,1+1) {};
				\node[mycirc,shape=circle,draw=black,text width=0.6cm, label=center:{-3 }] (021) at (3+0+0.3,1+0.25+1) {};
				\node[mycirc,shape=circle,draw=black,text width=0.6cm, label=center:{ -8}] (121) at (3+1+0.3,1+0.25+1) {};

				\node[mycirc,shape=circle,draw=black,text width=0.6cm, label=center:{+2 }] (002) at (3+0+0.3+0.3,0+0.25+0.25) {};
			
				\node[mycirc,shape=circle,draw=black,text width=0.6cm,label=center:{-3 } ] (102) at (3+1+0.3+0.3,0+0.25+0.25) {};
				\node[mycirc,shape=circle,draw=black,text width=0.6cm, label=center:{ -3}] (012) at (3+0+0.3+0.3,1+0.25+0.25) {};
				\node[mycirc,shape=circle,draw=black,text width=0.6cm, label=center:{-8 }] (112) at (3+1+0.3+0.3,1+0.25+0.25) {};

				\node[mycirc,shape=circle,draw=black,text width=0.6cm, label=center:{ +1}] (220) at (3+2,2) {};
				\node[mycirc,shape=circle,draw=black,text width=0.6cm, label=center:{+11 }] (221) at (3+2+0.3,2+0.25) {};
				\node[mycirc,shape=circle,draw=black,text width=0.6cm,label=center:{ +1} ] (202) at (3+2+0.3+0.3,0.25+0.25) {};
				\node[mycirc,shape=circle,draw=black,text width=0.6cm, label=center:{ +11}] (212) at (3+2+0.3+0.3,0.25+0.25+1) {};
				\node[mycirc,shape=circle,draw=black,text width=0.6cm, label=center:{+1 }] (022) at (3+0+0.3+0.3,2+0.25+0.25) {};
				\node[mycirc,shape=circle,draw=black,text width=0.6cm, label=center:{+11 }] (122) at (3+1+0.3+0.3,2+0.25+0.25) {};
				
				\node[mycirc,shape=circle,draw=black,text width=0.6cm,label=center:{-12 } ] (222) at (3+2+0.3+0.3,2+0.25+0.25) {};


				
				\path[draw=black]  (000) edge (100);
				\path[draw=black]  (000) edge (010);
				\path[draw=black]  (000) edge (001);

				\path[draw=black]  (100) edge (200);
				\path[draw=black]  (100) edge (110);	
				\path[draw=black]  (100) edge (101);		
				
				\path[draw=black]  (010) edge (110);	
				\path[draw=black]  (010) edge (011);	
				\path[draw=black]  (010) edge (020);
				
				\path[draw=black]  (001) edge (101);	
				\path[draw=black]  (001) edge (011);	
				\path[draw=black]  (001) edge (002);

				\path[draw=black]  (110) edge (210);
				\path[draw=black]  (110) edge (120);	
				\path[draw=black]  (110) edge (111);	
				
				\path[draw=black]  (101) edge (201);
				\path[draw=black]  (101) edge (111);	
				\path[draw=black]  (101) edge (102);	
				
				\path[draw=black]  (011) edge (111);	
				\path[draw=black]  (011) edge (012);	
				\path[draw=black]  (011) edge (021);
				
				\path[draw=black]  (111) edge (211);
				\path[draw=black]  (111) edge (121);	
				\path[draw=black]  (111) edge (112);

				\path[draw=black]  (200) edge (210);
				\path[draw=black]  (200) edge (201);
				\path[draw=black]  (020) edge (120);
				\path[draw=black]  (020) edge (021);
				\path[draw=black]  (002) edge (102);
				\path[draw=black]  (002) edge (012);
				
				\path[draw=black]  (210) edge (220);
				\path[draw=black]  (210) edge (211);	
				\path[draw=black]  (201) edge (202);
				\path[draw=black]  (201) edge (211);				
				\path[draw=black]  (120) edge (220);
				\path[draw=black]  (120) edge (121);
				\path[draw=black]  (021) edge (022);
				\path[draw=black]  (021) edge (121);
				\path[draw=black]  (102) edge (202);
				\path[draw=black]  (102) edge (112);
				\path[draw=black]  (012) edge (022);
				\path[draw=black]  (012) edge (112);

				\path[draw=black]  (211) edge (221);
				\path[draw=black]  (211) edge (212);
				
				\path[draw=black]  (121) edge (221);
				\path[draw=black]  (121) edge (122);
				
				\path[draw=black]  (102) edge (202);
				\path[draw=black]  (112) edge (122);
				\path[draw=black]  (112) edge (212);

				\path[draw=black]  (202) edge (212);
				\path[draw=black]  (220) edge (221);
				\path[draw=black]  (022) edge (122);
				
				\path[draw=black]  (222) edge (122);
				\path[draw=black]  (222) edge (212);
				\path[draw=black]  (222) edge (221);

			\end{tikzpicture}

		\end{minipage}\hfill
		
	\end{center}
	\caption{Eigenfunctions with $3$ and $4$ strong nodal domains in $H(2,3)$ and $H(3,3)$ respectively } \label{fig:concl}
\end{figure}

\section{Acknowledgements}
The research of the second author was supported by the NSP P. Beron project CP-MACT.


\end{document}